\theoremstyle{plain}
\newtheorem{thm}{Theorem}
\newtheorem{lem}[thm]{Lemma}
\newtheorem{claim}[thm]{Claim}
\newtheorem*{acknowledgments}{Acknowledgments}
\newtheorem*{organization}{Organization of this paper}\def\co{\colon\thinspace}
\begin{document}

\makeatletter

\newcommand*\@KP@Large@frame[2]{%
    \setlength\unitlength{\fontdimen 22 #1\tw@}%
    \vrule \@width\z@ \@height 4\unitlength \@depth\tw@\unitlength
    \begin{picture}(6,2)(-3,-1)%
        \def\@KP@Radius     {3}%
        \def\@KP@Hole@radius{.5}
        \def\@KP@Diameter   {6}%
        #2%
    \end{picture}%
}
\newcommand*\@KP@Small@frame[2]{%
    \setlength\unitlength{\fontdimen 22 #1\tw@}%
    \vrule \@width\z@ \@height \thr@@\unitlength \@depth\@ne\unitlength
    \begin{picture}(4,2)(-2,-1)%
        \def\@KP@Radius     {2}%
        \def\@KP@Hole@radius{.5}
        \def\@KP@Diameter   {4}%
        #2%
    \end{picture}%
}

\newcommand*\@KP@Radius     {}
\newcommand*\@KP@Hole@radius{}
\newcommand*\@KP@Diameter   {}
%
\newcommand*\@KP@Shape@A{%
    \put(0,0){\circle{\@KP@Diameter}}%
}
\newcommand*\@KP@Shape@B{%
    \Line(-\@KP@Radius,\@KP@Radius )(\@KP@Radius,-\@KP@Radius)%
    \Line(-\@KP@Radius,-\@KP@Radius)(-\@KP@Hole@radius,-\@KP@Hole@radius)%
    \Line(\@KP@Radius ,\@KP@Radius )(\@KP@Hole@radius ,\@KP@Hole@radius )%
}
\newcommand*\@KP@Shape@C{%
    \cbezier(-\@KP@Radius,\@KP@Radius )(0,0)(0,0)(\@KP@Radius,\@KP@Radius )%
    \cbezier(-\@KP@Radius,-\@KP@Radius)(0,0)(0,0)(\@KP@Radius,-\@KP@Radius)%
}
\newcommand*\@KP@Shape@D{%
    \cbezier(-\@KP@Radius,-\@KP@Radius)(0,0)(0,0)(-\@KP@Radius,\@KP@Radius)%
    \cbezier(\@KP@Radius ,-\@KP@Radius)(0,0)(0,0)(\@KP@Radius ,\@KP@Radius)%
}
\newcommand*\@KP@Shape@E{%
    \Line(\@KP@Radius,\@KP@Radius)(-\@KP@Radius,-\@KP@Radius)%
    \Line(-\@KP@Radius,\@KP@Radius )(-\@KP@Hole@radius,\@KP@Hole@radius)%
    \Line(\@KP@Radius ,-\@KP@Radius )(\@KP@Hole@radius ,-\@KP@Hole@radius )%
}

\newcommand*\@KP@Atomic@mathpalette[1]{%
    \mathinner{
        \mathchoice{%
            \linethickness{.6\p@}
            \@KP@Large@frame \textfont {#1}%
        }{%
            \linethickness{.4\p@}
            \@KP@Small@frame \textfont {#1}%
        }{%
            \linethickness{.3\p@}
            \@KP@Small@frame \scriptfont {#1}%
        }{%
            \linethickness{.2\p@}
            \@KP@Small@frame \scriptscriptfont {#1}%
        }%
    }%
}

\newcommand*\KPA{\@KP@Atomic@mathpalette \@KP@Shape@A}
\newcommand*\KPB{\@KP@Atomic@mathpalette \@KP@Shape@B}
\newcommand*\KPC{\@KP@Atomic@mathpalette \@KP@Shape@C}
\newcommand*\KPD{\@KP@Atomic@mathpalette \@KP@Shape@D}
\newcommand*\KPE{\@KP@Atomic@mathpalette \@KP@Shape@E}

\renewcommand{\appendixname}{Appendix }
\makeatother

\title{The skein algebra of  the Borromean rings complement}

\author{Go Miura}
\address{Brains Technology, Inc., Tokyo, 108-0074, Japan.}
	\email{miura.go@brains-tech.co.jp}
	\author{Sakie Suzuki}
\address{Department of Mathematical and Computing Science, School of Computing, Tokyo Institute of Technology, Tokyo, 152-8550, Japan.}
	\email{sakie@c.titech.ac.jp }
\date{}
\maketitle

\begin{abstract}
The skein algebra of an oriented $3$-manifold is a classical limit of the Kauffman bracket skein module and gives the coordinate ring of the $SL_2(\Bbb{C})$-character variety. In this paper we determine the quotient of a polynomial ring which is isomorphic to the skein algebra of a group with three generators and two relators. As an application, we give an explicit formula for the skein algebra of the Borromean rings complement in $S^3$.
\end{abstract}

\tableofcontents

\section{Introduction}

\subsection{Classical limit of Kauffman bracket skein module and skein algebra}

Let $M$ be an an oriented $3$-manifold and $R$  a commutative ring with unit $1$.
Przytycki \cite{przytycki1, przytycki2} and Turaev \cite{turaev1, turaev2}  introduced  the Kauffman bracket skein module (KBSM) $S(M; R, t)$
as a generalization of the Kauffman bracket in $S^3$. The KBSM of $M$ is defined by the Kauffman bracket skein relation $\KPB -t\KPC-t^{-1}\KPD$ and the weight relation $\KPA + (t^{2} + t^{-2})\emptyset$ dividing the free $R$-module spanned by framed links in $M$. 
For link complements in $S^3$, the KBSM of two-bridge links  are calculated \cite{bullock1, BL, le, letran}.  For other links and general parameter $t$, the KBSM is less researched because its algebraic structure is too complicated.

%
%
%
%

The classical case $t=-1$ of the KBSM is relatively simple to be calculated and is important; in this case the framing becomes irrelevant and 
 $S(M; R, -1)$ admits a commutative algebra structure with the multiplication as the disjoint union of links. Bullock \cite{bullock} showed that the  algebra 
$S(M;\Bbb{C},-1)$
 modulo its nilradical  is isomorphic to the coordinate ring of the $SL_2(\Bbb{C})$-character variety of $M$.  
From this point of view the KBSM can be seen as a deformation of $SL_2(\Bbb{C})$-character variety, and plays an important role in the study of the AJ conjecture \cite{FGL, le, letran2}, which relates the recurrence polynomial of the colored Jones polynomial and the A-polynomial.

In \cite{przytyckisikora2, przytyckisikora}, Przytycki and Sikora introduced the skein algebra $S(G; R)$ of a group $G$, which is a generalization of $S(M; R, -1)$ in the sense that  $S(M; R, -1)$ can be recovered when $G=\pi_1 (M)$.  
They also showed that the skein algebra $S(G;\Bbb{C})$ modulo its nilradical is isomorphic to the coordinate ring of the $SL_2(\Bbb{C})$-character variety of $G$. 

For link complements in $S^3$, Tran \cite{tran1, tran2} calculated the skein algebras of pretzel links. Note that the fundamental groups of the complements of pretzel links and  two-bridge links  are generated by two elements.
In this paper we determine the skein algebra of a group with three generators and two relators, and give an explicit formula for the skein algebra, i.e., the  KBSM with $t=-1$, of the Borromean rings complement in $S^3$.	

\subsection{Results}
Let $G$ be a group, $R$ a commutative ring with unit $1$ and $SR[G] = \bigoplus_m SR[G]_m$ the symmetric algebra over the group ring $R[G]$. Consider the ideal $I$ of $SR[G]$ generated by $e-2$ and $g\otimes h - gh - gh^{-1}$ for $g, h\in G$. Here we denote by $e\in SR[G]_1$ the identity in $G$, and by $2\in SR[G]_0$ the sum of units $1+1 \in R$. Then the skein algebra $S(G;R)$  is defined  as the quotient algebra $SR[G]/I$. In this paper we denote by $[g]$ the equivalence class of $g\in G$ in $SR[G]/I = S(G;R)$.

Horowitz \cite{horowitz}, Culler and Shalen \cite{culler-shalen}  showed that  if $G$ is generated by $g_1, g_2, \ldots, g_n$, then each $SL_2(\Bbb{C})$-character $\chi$ is determined by the values $\chi(g_{i_1})\cdots \chi(g_{i_k})$,  $1\leq k \leq n, 1\leq i_1< i_2 < \cdots < i_k \leq n$. This result implies that 
the skein  algebra $S(G;\Bbb{C})$ is generated by $\{[g_{i_1}\cdots g_{i_k}]\ |\ 1\leq k\leq n, 1\leq i_1<\cdots<i_k\leq n\}$. The main interest of the present paper is the relations among these generators.

Let $F_n$ be the free group with generators $g_1,\ldots,g_n$. As for the free groups $F_1$ and $F_2$, there are no relations among the above generators of $S(F_1;\Bbb{C})$ and $S(F_2;\Bbb{C})$, and therefore $S(F_1;\Bbb{C})$ and $S(F_2;\Bbb{C})$ are nothing but the polynomial rings $\Bbb{C}[[g_1]]$ and $\Bbb{C}[[g_1],[g_2],[g_1g_2]]$, respectively. However, as for $F_n$ with $n$ larger than 2, the skein algebra is no longer a polynomial ring of the variables $\{[g_{i_1}\cdots g_{i_k}]\ |\ 1\leq k\leq n, 1\leq i_1<\cdots<i_k\leq n\}$, but is the quotient of the polynomial ring by a certain ideal. In order to describe this ideal we consider the algebra homomorphism from the polynomial ring $\Bbb{C}[x_1,\ldots,x_{1\cdots n}]$ with $2^n-1$ variables $x_{{i_1}\cdots{i_k}}\ (1\leq k \leq n, 1\leq i_1< i_2 < \cdots < i_k \leq n)$ to the skein algebra $S(F_n;\Bbb{C})$ as follows:
\begin{align*}
\Phi \co \Bbb{C}[x_1,\ldots,x_{1\cdots n}] &\rightarrow S(F_n;\Bbb{C})\\
x_{{i_1}\cdots{i_k}} &\mapsto [g_{i_1}\cdots g_{i_k}].
\end{align*}  
Since $\Phi$ is surjective, $S(F_n;\Bbb{C})$ is isomorphic to $\Bbb{C}[x_1,\ldots,x_{1\cdots n}]/\ker\Phi$. For $n=3$, by the argument in \cite{horowitz}, we have $\ker\Phi=\langle K\rangle$, where
\begin{equation*}
\begin{split}
K &:= x_{123}^2 - (x_{12}x_3 + x_{13}x_2 + x_{23}x_1 - x_1x_2x_3)x_{123} + x_1^2+x_2^2 + x_3^2 + x_{12}^2 +x_{23}^2 + x_{13}^2\\
 &\quad - x_1x_2x_{12} - x_1x_3x_{13}- x_2x_3x_{23} + x_{12}x_{13}x_{23} -4,
\end{split}
\end{equation*}
i.e., we have 
$$S(F_3; \Bbb{C})=\Bbb{C}[x_1,\ldots, x_{123}]/ \langle K\rangle.$$ 

Let $G$ be a group with generators $g_1,\ldots, g_n$ and set $\iota\co F_n\ni g\mapsto \bar g\in G$ the natural group homomorphism such that $\iota(g_i) = g_i$ for $1\leq i\leq n$. We extend $\iota$ to the algebra homomorphism $\iota \co S(F_n;\Bbb{C})\rightarrow S(G;\Bbb{C})$ and define $\Phi_G:=\iota \circ \Phi$. Then we have $\Bbb{C}[x_1,\ldots,x_{1\cdots n}]/\ker\Phi_G\cong S(G;\Bbb{C})$. 

For $n=1$ or $2$, it is known that
$$\ker\Phi_G=\langle P_u - P_v\ |\ u,v\in F_n\mathrm{\ s.t.\ }\bar u=\bar v \rangle,$$
where $P_g$ is any polynomial such that $\Phi(P_g)=[g]$.

We determine $\ker\Phi_G$ for $n=3$ as follows.

\begin{thm}\label{Theorem:1}
Let $G$ be a group with three generators. Then we have
$$\ker\Phi_G=I_G :=\langle K, P_u - P_v\ |\ u,v\in F_3\mbox{\ s.t.\ } \bar u=\bar v\rangle,$$
where we fix a polynomial $P_x\in\Phi^{-1}([x])$ for $x\in F_3$. Therefore we have
$$S(G; \Bbb{C})\cong\Bbb{C}[x_1, x_2, x_3, x_{12}, x_{13}, x_{23}, x_{123}] / I_G.$$
\end{thm}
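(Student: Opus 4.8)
The plan is to prove the two inclusions $\ker\Phi_G \supseteq I_G$ and $\ker\Phi_G \subseteq I_G$ separately. The first inclusion is the easy direction: we already know $K \in \ker\Phi$ from Horowitz's computation (recalled in the excerpt), and $\iota$ kills $K$ as well since $\Phi_G = \iota\circ\Phi$; moreover for $u,v\in F_3$ with $\bar u = \bar v$ in $G$ we have $\Phi_G(P_u - P_v) = \iota([u]) - \iota([v]) = [\bar u] - [\bar v] = 0$. Hence $I_G \subseteq \ker\Phi_G$.

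The substance is the reverse inclusion. The key idea is to factor $\Phi_G$ through $\Phi$: since $\Phi$ is surjective with $\ker\Phi = \langle K\rangle$, it suffices to show that for every $P \in \Bbb{C}[x_1,\dots,x_{123}]$ with $\Phi_G(P) = 0$ we have $P \in I_G$. Set $a := \Phi(P) \in S(F_3;\Bbb{C})$; then $\iota(a) = 0$ in $S(G;\Bbb{C})$. The heart of the matter is therefore to understand $\ker\bigl(\iota\co S(F_3;\Bbb{C})\to S(G;\Bbb{C})\bigr)$ and to show that its preimage under $\Phi$ lies in $I_G$. I would first establish that $\ker\iota$ is the ideal of $S(F_3;\Bbb{C})$ generated by $\{[u] - [v] : u,v\in F_3,\ \bar u = \bar v\}$. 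This is the genuinely new input and is the expected main obstacle: one direction is clear, but for the other direction one must show that any skein element of $F_3$ that dies in $S(G;\Bbb{C})$ can be written, already inside the skein algebra, using the defining relations of $G$. I expect this to follow from a careful analysis of the presentation of $S(G;\Bbb{C})$ as $SR[G]/I$ versus $SR[F_3]/I$, tracking how the relators $\bar u = \bar v$ interact with the skein relations $g\otimes h - gh - gh^{-1}$; concretely, the extra relations in $S(G;\Bbb{C})$ beyond those in $S(F_3;\Bbb{C})$ come exactly from $[\bar u] = [\bar v]$ whenever $\bar u = \bar v$, since the skein relations are ``uniform'' across all groups. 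One should be able to make this precise by writing $R[G]$ as a quotient of $R[F_3]$ and checking that the symmetric-algebra and ideal constructions are compatible with this quotient, so that $S(G;R) = S(F_3;R)/\langle [u]-[v]\rangle$.

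Granting that description of $\ker\iota$, the remaining step is bookkeeping with the chosen lifts $P_x$. Write $a = \sum_j c_j\,([u_j] - [v_j])$ with $\bar u_j = \bar v_j$. Lifting each $[u_j]$ to $P_{u_j}$ and each $[v_j]$ to $P_{v_j}$ under $\Phi$, the element $P - \sum_j c_j\,(P_{u_j} - P_{v_j})$ lies in $\ker\Phi = \langle K\rangle$, hence equals $QK$ for some polynomial $Q$. Therefore $P = \sum_j c_j\,(P_{u_j} - P_{v_j}) + QK \in I_G$, which is exactly what we wanted. Here one uses implicitly that $P_u - P_v \in I_G$ for an \emph{arbitrary} pair with $\bar u = \bar v$, not just for the fixed representatives used to define the generators of $I_G$; this is routine, since changing the lift $P_u$ to another element of $\Phi^{-1}([u])$ alters it by an element of $\langle K\rangle \subseteq I_G$. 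The final isomorphism $S(G;\Bbb{C}) \cong \Bbb{C}[x_1,\dots,x_{123}]/I_G$ is then immediate from the surjectivity of $\Phi_G$ together with $\ker\Phi_G = I_G$.
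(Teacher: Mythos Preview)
Your proposal is correct and follows essentially the same route as the paper: the key input is precisely the identification $S(G;\Bbb{C})\cong S(F_3;\Bbb{C})/\langle [u]-[v]:\bar u=\bar v\rangle$ (the paper isolates this as a lemma and proves it by the same idea you sketch, passing through $S\Bbb{C}[F_3]$ and applying the third isomorphism theorem), after which the conclusion follows from $\ker\Phi=\langle K\rangle$. One small imprecision: in your expression $a=\sum_j c_j([u_j]-[v_j])$ the coefficients $c_j$ must be elements of $S(F_3;\Bbb{C})$, not scalars, so you should also lift each $c_j$ to a polynomial $C_j$ before forming $P-\sum_j C_j(P_{u_j}-P_{v_j})\in\langle K\rangle$; this does not affect the argument.
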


Note that $I_G$ is generated by a finite set of polynomials, since the polynomial ring is Noetherian. The following theorem gives a finite set of generators of $I_G$ when $G$ has two relators.

\begin{thm}\label{Theorem:2}
Let $G$ be a group defined by three generators $g_1,g_2,g_3$ and two relators $\alpha=\beta$ and $\gamma=\delta$. Then $I_G$ is generated by 
$$\{K, P_{\alpha g}-P_{\beta g}, P_{\gamma g}-P_{\delta g}\ |\ g=g_1^{i_1}g_2^{i_2}g_3^{i_3}, 0\leq i_1, i_2, i_3\leq1\}.$$ 
\end{thm}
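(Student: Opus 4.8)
The plan is to carry out the whole argument inside $S(F_3;\Bbb{C})\cong\Bbb{C}[x_1,\dots,x_{123}]/\langle K\rangle$, transporting everything through $\Phi$. Since $K\in\ker\Phi\subseteq\ker\Phi_G=I_G$, the ideal $I_G$ is the full preimage under $\Phi$ of an ideal $\mathcal{I}\subseteq S(F_3;\Bbb{C})$, and Theorem~\ref{Theorem:1} (read modulo $K$) gives $\mathcal{I}=\langle [u]-[v]\mid u,v\in F_3,\ \bar u=\bar v\rangle$. Let $\mathcal{J}$ be the ideal of $S(F_3;\Bbb{C})$ generated by $\{[\alpha g]-[\beta g],\ [\gamma g]-[\delta g]\mid g=g_1^{i_1}g_2^{i_2}g_3^{i_3},\ 0\le i_1,i_2,i_3\le1\}$. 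Then the theorem is exactly the equality $\mathcal{I}=\mathcal{J}$: pulling it back along $\Phi$ and using that $K\in I_G$ turns the generators of $\mathcal{J}$ into the asserted polynomial generators of $I_G$ (adding $K$). So I would reduce to proving $\mathcal{I}=\mathcal{J}$.

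For that, I would first replace $\mathcal{I}$ by the a priori larger-looking ideal $\mathcal{I}'=\langle [x\alpha]-[x\beta],\ [x\gamma]-[x\delta]\mid x\in F_3\rangle$. The inclusion $\mathcal{I}'\subseteq\mathcal{I}$ is immediate since $\overline{x\alpha}=\overline{x\beta}$ and $\overline{x\gamma}=\overline{x\delta}$. Conversely, if $\bar u=\bar v$ then $uv^{-1}$ lies in the normal closure of $\{\alpha\beta^{-1},\gamma\delta^{-1}\}$, so $u=\bigl(\prod_{k=1}^{m}w_k\rho_k^{\varepsilon_k}w_k^{-1}\bigr)v$ with $\rho_k\in\{\alpha\beta^{-1},\gamma\delta^{-1}\}$ and $\varepsilon_k\in\{\pm1\}$; inserting the partial products $v_k$ (so $v_0=u$, $v_m=v$, $v_{k-1}=w_k\rho_k^{\varepsilon_k}w_k^{-1}v_k$) and using cyclic invariance $[ab]=[ba]$ of the skein class, each difference $[v_{k-1}]-[v_k]$ is, up to sign, of the form $[y\alpha]-[y\beta]$ or $[y\gamma]-[y\delta]$, whence $[u]-[v]=\sum_k([v_{k-1}]-[v_k])\in\mathcal{I}'$. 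Thus $\mathcal{I}=\mathcal{I}'$, and since $\mathcal{J}\subseteq\mathcal{I}'$ trivially, it remains to prove the key claim: $[x\alpha]-[x\beta]\in\mathcal{J}$ for \emph{every} $x\in F_3$ (and the analogous statement with $\gamma,\delta$).

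The key claim I would prove by induction on the reduced word length $\ell(x)$, using the standard identities in the skein algebra --- cyclic invariance $[ab]=[ba]$, inversion invariance $[a]=[a^{-1}]$ (hence conjugation invariance $[cac^{-1}]=[a]$), the product-to-sum relation $[ab]+[ab^{-1}]=[a][b]$, and the three-variable identity $[abc]+[acb]=[a][bc]+[b][ca]+[c][ab]-[a][b][c]$, all of which are straightforward consequences of the defining relations of $S(G;\Bbb{C})$. Combining them gives, for all $u,v\in F_3$ and all generators $g_i,g_j$, the reduction formulas
\begin{align*}
[ug_j^{-1}v\alpha]&=[g_j]\,[uv\alpha]-[ug_jv\alpha],\\
[ug_j^{2}v\alpha]&=[g_j]\,[ug_jv\alpha]-[uv\alpha],\\
[ug_ig_jv\alpha]+[ug_jg_iv\alpha]&=[g_i]\,[ug_jv\alpha]+[g_j]\,[ug_iv\alpha]+\bigl([g_ig_j]-[g_i]\,[g_j]\bigr)[uv\alpha],
\end{align*}
together with the same formulas with $\beta$ in place of $\alpha$; subtracting, the corresponding identities hold for $R(x):=[x\alpha]-[x\beta]$. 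I would then run a secondary induction on the pair (number of letters with negative exponent, number of out-of-order adjacent pairs among the positive letters), ordered lexicographically: if $x$ contains some $g_j^{-1}$, the first formula rewrites $R(x)$ via $R$ of a strictly shorter word and of a word with one fewer negative letter; if $x$ is positive with a repeated consecutive generator, the second formula strictly lowers $\ell$; if $x$ is positive with no consecutive repetition but is not sorted, the third formula transposes an out-of-order adjacent pair at the cost of strictly shorter words, lowering the inversion count; and if $x$ is positive, has no consecutive repetition, and is sorted, then $x$ is precisely one of the words $g_1^{i_1}g_2^{i_2}g_3^{i_3}$ with $0\le i_j\le1$, so $R(x)$ is one of the generators of $\mathcal{J}$. (Consistently, every positive sorted word of length $\ge4$ must repeat a generator consecutively, so the induction never stalls above length $3$.) Since every word appearing on the right-hand sides is either strictly shorter or strictly smaller in the secondary order, the induction closes; the argument for $[x\gamma]-[x\delta]$ is identical.

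The main obstacle is this last step: arranging the reduction so that it provably terminates at the eight sorted, square-free, positive words. Transposing letters or flipping a negative exponent can create new consecutive repetitions or otherwise alter the word, so the delicate point is to pin down the right well-founded measure (length, refined first by the number of negative exponents and then by the number of inversions) and to verify case-by-case that each of the three reduction formulas strictly decreases it on the newly produced words --- this is where essentially all the bookkeeping lies. A minor secondary point is to write the telescoping argument of the second paragraph so as to treat uniformly all four possibilities $\rho_k^{\varepsilon_k}\in\{\alpha\beta^{-1},\beta\alpha^{-1},\gamma\delta^{-1},\delta\gamma^{-1}\}$, and to record precisely which of the trace identities above are being invoked at each stage.
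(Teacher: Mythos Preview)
Your overall strategy matches the paper's: both reduce (via $\Phi$) to showing, inside $S(F_3;\Bbb{C})$, that every $[x\alpha]-[x\beta]$ and $[x\gamma]-[x\delta]$ lies in the ideal generated by the eight base differences, after a telescoping argument that is exactly the paper's Lemma~\ref{genlem:2}. The key inductive claim is the paper's Lemma~\ref{genelem:1}, and your three reduction formulas are the same identities the paper uses.

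The one gap is in your termination measure for the transposition step. You take the secondary invariant to be the number of out-of-order \emph{adjacent} pairs (descents) and assert that swapping an adjacent out-of-order pair lowers it. This is false: for $x=g_3g_1g_2g_1$ the descents are $(g_3,g_1)$ and $(g_2,g_1)$, and swapping the first yields $g_1g_3g_2g_1$, whose descents are $(g_3,g_2)$ and $(g_2,g_1)$ --- still two. The easy fix is to replace descents by the \emph{total} inversion number (pairs $p<q$ with the index of $x_p$ exceeding that of $x_q$), which drops by exactly one under any adjacent swap of an out-of-order pair; with that correction your lexicographic order (length, number of negative letters, total inversions) is well-founded and each of your three formulas strictly decreases it. For comparison, the paper avoids this bookkeeping by inducting instead on the syllable count $r$ in $g=g_{i_1}^{m_1}\cdots g_{i_r}^{m_r}$: once some index repeats (which is forced when $r\ge4$, or when $r=3$ with $i_1=i_3$), it splits $g=XYZ$ at the two occurrences and applies the single identity $[\alpha XYZ]=[\alpha XZ]\cdot[Y]-[\alpha XY^{-1}Z]$, in which both $XZ$ and $XY^{-1}Z$ have strictly fewer syllables; the finitely many remaining cases ($r\le3$ with pairwise distinct indices) are then handled directly, so no sorting measure is needed.
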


By the Wirtinger presentation of the Borromean rings $B$, we have
$$\pi_1M_B =\langle  g_1, g_2, g_3\ |\  \alpha = \beta,
\gamma = \delta\rangle$$
where $\alpha = g_3g_2^{-1}g_1g_2g_1^{-1}, \beta=g_2^{-1}g_1g_2 g_1^{-1}g_3, \gamma=g_2g_1^{-1}g_3g_1g_3^{-1}$ and $\delta=g_1^{-1}g_3g_1g_3^{-1}g_2$ (Lemma \ref{pi1ofBorromean}). Using Theorem \ref{Theorem:2} we give explicit generators of $I_{\pi_1M_B}$ as follows. 

\begin{thm}\label{Theorem:3}
For the Borromean rings complement $M_B$, let $I_{\pi_1M_B}$ be the ideal generated by the polynomials below. Then we have $S(\pi_1M_B;\Bbb{C}) \cong \Bbb{C}[x_1,\ldots,x_{123}]/I_{\pi_1M_B}$. 
\begin{align*}
K &= x_{123}^2 - (x_1x_{23} + x_2x_{13} + x_3x_{12} - x_1x_2x_3)x_{123} + x_1^2+x_2^2 + x_3^2 \\
&\quad + x_{12}^2 + x_{13}^2 + x_{23}^2 - x_1x_2x_{12} - x_1x_3x_{13}- x_2x_3x_{23} + x_{12}x_{13}x_{23} -4,\\
Q_{\alpha g_1,\beta g_1} &= -2 x_{12} x_{23} + 2 x_2 x_{123} - {x_1}^2 x_2 x_{123} + x_1 x_2 x_{12} x_{13} + x_1 x_2 x_{23}\\ 
&\quad + x_1 x_{12} x_{123} - {x_{12}}^2 x_{13} - {x_2}^2 x_{13},\\
Q_{\gamma g_1,\delta g_1} &= 2x_{13}x_{23} - 2x_3x_{123} + x_1^2x_3x_{123} - x_1x_3x_{12}x_{13} - x_1x_3x_{23}\\
&\quad - x_1x_{13}x_{123} + x_{12}x_{13}^2 +x_3^2x_{12},\\
Q_{\alpha g_2,\beta g_2} &= -x_1x_2x_{12}x_{23} + {x_{12}}^2x_{23} + {x_1}^2x_{23} + x_1x_2^2x_{123} - x_2x_{12}x_{123}\\ 
&\quad - 2x_1x_{123} - x_{1}x_2x_{13} + 2x_{12}x_{13},\\
Q_{\gamma g_3,\delta g_3} &= x_1x_3x_{13}x_{23} - {x_{13}}^2x_{23} - {x_1}^2x_{23} - x_1x_3^2x_{123} + x_3x_{13}x_{123}\\ 
&\quad + 2x_1x_{123} + x_{1}x_3x_{12} -2x_{12}x_{13},\\
Q_{\alpha g_1g_2,\beta g_1g_2} &= -x_1^2x_{123} + x_2^2x_{123} + x_1x_{12}x_{13} - x_2x_{12}x_{23} - 2x_2x_{13} + 2x_1x_{23},\\
Q_{\gamma g_1g_2,\delta g_1g_2} &= x_1^3 +  x_1x_3^2 +x_1x_{13}^2 -x_1^2x_3x_{13}  - 4x_1 +x_1^2x_2x_3x_{123} - x_1x_2x_{13}x_{123}\\
&\quad  -x_1x_2x_3x_{23} -x_1x_3x_{12}x_{123} +x_{12}x_{13}x_{123} - x_1^2x_2x_{12} + x_1x_{12}^2\\
&\quad + x_3x_{12}x_{23} - x_2x_3x_{123}+ x_2x_{13}x_{23} + x_1x_2^2,\\
Q_{\alpha g_1g_3,\beta g_1g_3} &= -4x_1 + x_1^3+ x_1x_{12}^2 + x_{12}x_{13}x_{123} + x_1x_{13}^2 - x_1^2x_2x_{12} - x_1x_2x_{13}x_{123}\\
&\quad + x_1x_2^2 + x_2x_{13}x_{23} - x_1^2x_3x_{13} - x_3x_{12}^2x_{13} + x_2x_3x_{123}\\
&\quad + x_1x_2x_3x_{12}x_{13} - x_2^2x_3x_{13} - x_3x_{12}x_{23} + x_1x_3^2,\\
Q_{\gamma g_1g_3,\delta g_1g_3} &= x_1^2 x_{123} - x_3^2x_{123} - x_1^3 x_{23} + x_3^3x_{12} - x_1 x_{12} x_{13} + x_3x_{13} x_{23}\\
&\quad + 2 x_1 x_{23} - 2 x_3x_{12} - x_1 x_{13}^2 x_{23} + x_3x_{12} x_{13}^2 + x_1^2 x_3 x_{12} - x_1x_3^2x_{23}\\
&\quad + x_1^2x_3 x_{13} x_{23} - x_1 x_3^2x_{12} x_{13},\\
Q_{\alpha g_2g_3,\beta g_2g_3} &= x_2^3 + x_2x_3^2 + x_2x_{23}^2 - x_2^2x_3x_{23} - 4x_2 + x_1x_2^2x_3x_{123} - x_1x_2x_{23}x_{123}\\
&\quad - x_1x_2x_3x_{13} - x_2x_3x_{12}x_{123} + x_{12}x_{23}x_{123} - x_1x_2^2x_{12} + x_2x_{12}^2\\
&\quad + x_3x_{12}x_{13} - x_1x_3x_{123} + x_1x_{13}x_{23} + x_1^2x_2,\\
Q_{\gamma g_2g_3,\delta g_2g_3} &= -x_3^3 -x_1^2x_3 -x_3x_{13}^2 +x_1x_3^2x_{13}  +4x_3 -x_1x_2x_3^2x_{123} + x_2x_3x_{13}x_{123}\\
&\quad  +x_1x_2x_3x_{12} +x_1x_3x_{23}x_{123} -x_{13}x_{23}x_{123} +x_2x_3^2x_{23} -x_3x_{23}^2\\
&\quad -x_1x_{12}x_{23} +x_1x_2x_{123} -x_2x_{12}x_{13} -x_2^2x_3,\\
Q_{\alpha g_1g_2g_3,\beta g_1g_2g_3} &= -x_2x_3x_{12}x_{23} + x_3^2x_{12} +x_1x_3x_{23} -x_2x_{3}x_{13} - 4x_{12} +x_2^2x_{12}\\ 
&\quad +x_{12}^3 +x_1^2x_{12} -x_1x_2x_{12}^2 -x_1x_2x_{123}^2 +x_{12}x_{123}^2\\
&\quad +x_1x_2x_3x_{12}x_{123} -x_3x_{12}^2x_{123} +x_2x_{23}x_{123} -x_1^2x_3x_{123} +x_1x_{13}x_{123},\\
Q_{\gamma g_1g_2g_3,\delta g_1g_2g_3} &= (-x_1x_{23} +x_3x_{12})x_{13}x_{123} +(x_1x_{12} -x_3x_{23})x_{123}\\
&\quad +(x_3^2-x_1^2)x_{12}x_{23} +(x_{23}^2-x_{12}^2)x_{13} +(x_1^2x_3x_{23} -x_1x_3^2x_{12})x_{123}\\
&\quad +(x_{12}^2-x_{23}^2)x_{1}x_3 +(x_1x_{23}-x_3x_{12})x_2.
\end{align*}
\end{thm}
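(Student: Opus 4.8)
The plan is to feed the Wirtinger presentation of $\pi_1M_B$ supplied by Lemma~\ref{pi1ofBorromean} into Theorem~\ref{Theorem:2}, then prune the resulting finite generating set using the symmetry of the two relators, and finally compute the surviving generators explicitly by repeated use of the skein (trace) identities in $S(F_3;\Bbb{C})=\Bbb{C}[x_1,\dots,x_{123}]/\langle K\rangle$. By Lemma~\ref{pi1ofBorromean} we may take $\pi_1M_B=\langle g_1,g_2,g_3\mid \alpha=\beta,\ \gamma=\delta\rangle$ with $\alpha=g_3g_2^{-1}g_1g_2g_1^{-1}$, $\beta=g_2^{-1}g_1g_2g_1^{-1}g_3$, $\gamma=g_2g_1^{-1}g_3g_1g_3^{-1}$, $\delta=g_1^{-1}g_3g_1g_3^{-1}g_2$, so Theorem~\ref{Theorem:2} tells us that $\ker\Phi_{\pi_1M_B}=I_{\pi_1M_B}$ is generated by $K$ together with the sixteen polynomials $P_{\alpha g}-P_{\beta g}$ and $P_{\gamma g}-P_{\delta g}$ for $g=g_1^{i_1}g_2^{i_2}g_3^{i_3}$, $0\le i_1,i_2,i_3\le 1$. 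It then suffices to prove that four of these sixteen already lie in $\langle K\rangle$, and that the remaining twelve coincide, modulo $\langle K\rangle$, with the twelve polynomials $Q_{\ast,\ast}$ listed in the statement; the asserted isomorphism then follows from Theorem~\ref{Theorem:1}.

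For the pruning, observe the conjugation relations $\alpha=g_3\,\beta\,g_3^{-1}$ and $\gamma=g_2\,\delta\,g_2^{-1}$, which are immediate from the explicit words (each of $\beta,\delta$ is a cyclic rotation of $\alpha,\gamma$). Consequently $\alpha g$ is conjugate to $\beta g$ in $F_3$ whenever $g$ is a power of $g_3$, and $\gamma g$ is conjugate to $\delta g$ whenever $g$ is a power of $g_2$; among the eight words $g=g_1^{i_1}g_2^{i_2}g_3^{i_3}$ this happens exactly for $g\in\{1,g_3\}$ in the first case and for $g\in\{1,g_2\}$ in the second. Since the class $[w]\in S(F_3;\Bbb{C})$ depends only on the conjugacy class of $w$, in these four cases $\Phi(P_{\alpha g}-P_{\beta g})=[\alpha g]-[\beta g]=0$ in $S(F_3;\Bbb{C})$, so $P_{\alpha g}-P_{\beta g}\in\ker\Phi=\langle K\rangle$ (and similarly for $\gamma,\delta$); these four generators can therefore be discarded. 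What remains is $K$ together with the twelve differences $P_{\alpha g}-P_{\beta g}$ for $g\in\{g_1,g_2,g_1g_2,g_1g_3,g_2g_3,g_1g_2g_3\}$ and $P_{\gamma g}-P_{\delta g}$ for $g\in\{g_1,g_3,g_1g_2,g_1g_3,g_2g_3,g_1g_2g_3\}$ --- precisely the index set of the statement.

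The final step is an explicit computation. For each of the twelve surviving words one expands $[\alpha g]$ and $[\beta g]$ (respectively $[\gamma g]$ and $[\delta g]$) into polynomials in $x_1,\dots,x_{123}$ by applying $[uv]=[u][v]-[uv^{-1}]$ and $[u^{-1}]=[u]$ until only the seven generators survive, subtracts, and then subtracts an appropriate multiple of $K$ to bring the result into the normalized form $Q_{\alpha g,\beta g}$ (respectively $Q_{\gamma g,\delta g}$). Since $\Phi(Q_{\alpha g,\beta g})=[\alpha g]-[\beta g]=\Phi(P_{\alpha g}-P_{\beta g})$, these two polynomials differ by an element of $\ker\Phi=\langle K\rangle$, so replacing $P_{\alpha g}-P_{\beta g}$ by $Q_{\alpha g,\beta g}$ does not change the ideal $\langle K,\dots\rangle$. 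Performing this in all twelve cases yields $I_{\pi_1M_B}=\langle K,\,Q_{\alpha g_1,\beta g_1},\dots,Q_{\gamma g_1g_2g_3,\delta g_1g_2g_3}\rangle$, which is the claim.

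The bulk of the work, and the place where errors are easiest to make, is this last computation: the words involved have up to eight letters, the trace reductions are long, the intermediate polynomials are sizable, and one must fix the lifts $P_w$ once and for all and keep the bookkeeping consistent across all twelve cases and across $\alpha g$ versus $\beta g$. In practice it is cleanest to organize --- and independently double-check --- this step with computer algebra; conceptually, however, nothing beyond the elementary skein identities and polynomial arithmetic is required.
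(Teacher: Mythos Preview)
Your proposal is correct and follows essentially the same approach as the paper: apply Theorem~\ref{Theorem:2} to the presentation from Lemma~\ref{pi1ofBorromean}, observe that the four generators for $g\in\{e,g_3\}$ (for $\alpha,\beta$) and $g\in\{e,g_2\}$ (for $\gamma,\delta$) vanish in $S(F_3;\Bbb{C})$ by the conjugacy $\alpha=g_3\beta g_3^{-1}$, $\gamma=g_2\delta g_2^{-1}$, and then reduce to verifying $\Phi(Q_{\alpha g,\beta g})=[\alpha g]-[\beta g]$ for the twelve remaining $g$, which is precisely the paper's Lemma~\ref{dev}. The only difference is that the paper carries out all twelve trace reductions explicitly by hand (using Lemma~\ref{lem:skeineq} and the auxiliary formulae of Lemma~\ref{cyc}), whereas you defer them to a computer-algebra check; conceptually the arguments coincide.
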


We can find some symmetries in the polynomials above. For example, if we multiply $Q_{\alpha g_1,\beta g_1}$ by $-1$ and exchange each "2" and "3"  in the indices of the variables except for $x_{23}$ and $x_{123}$, then we obtain $Q_{\gamma g_1,\delta g_1}$. This kind of symmetry can be observed between $Q_{\alpha g_1,\beta g_1}$ and $Q_{\gamma g_1,\delta g_1}$, $Q_{\alpha g_2,\beta g_2}$ and $Q_{\gamma g_3, \delta g_3}$ and among $Q_{\gamma g_1g_2,\delta g_1g_2}$, $Q_{\alpha g_2g_3,\beta g_2g_3}$ and $Q_{\gamma g_2g_3, \delta g_2g_3}$. Each $Q_{\alpha g_1g_2, \beta g_1g_2}, Q_{\gamma g_1g_3, \delta g_1g_3}$ and $Q_{\gamma g_1g_2g_3, \delta g_1g_2g_3}$ has a symmetry by themselves.

\begin{organization}
\rm{The rest of the paper is organized as follows. In Section \ref{FS} we study the skein algebra of groups with three generators and two relators, and prove Theorem \ref{Theorem:1} and Theorem \ref{Theorem:2}. In Section \ref{SS} we consider the skein algebra of  the complement $M_B$ of the Borromean rings $B$. In Section \ref{SS_1} using  Wirtinger's method we give a presentation of  $\pi_1M_B$ with three generators and two relators,  and in Section \ref{SS_2} we prove Theorem \ref{Theorem:3}. }

\end{organization}

\begin{acknowledgments}
\rm{We would like to thank K.Habiro, K.Okazaki,  Y. Ota, and A. Tran for valuable discussions. This work is partially supported by JSPS KAKENHI Grant Number JP19K14523.}
\end{acknowledgments}

\section{Skein algebra of groups with three generators and two relators}\label{FS}
In this section we prove Theorem \ref{Theorem:1} and Theorem \ref{Theorem:2}.

\subsection{Proof of Theorem \ref{Theorem:1}.}\label{FS_3}
We prove Theorem \ref{Theorem:1}.  
Let  $G$  be a group with three generators $g_1,g_2,g_3$. Recall from the introduction the natural group homomorphism $\iota : F_3\ni g\mapsto \bar g\in G$ 
such that $\iota(g_i)=g_i$ for $i=1,2,3$.
We use the following lemma.
\begin{lem}\label{prop:15}
$S(G; \Bbb{C})$ is isomorphic to $S(F_3; \Bbb{C}) / \langle\ [u] - [v]\ |\ u, v\in F_3\mbox{\ s.t.\ }\bar{u}=\bar{v}\ \rangle $. 
\end{lem}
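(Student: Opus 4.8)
The plan is to exhibit the claimed quotient as the natural codomain of the skein-algebra map induced by $\iota$ and then check it has the correct universal property. First I would recall that the skein algebra is defined as a quotient $SR[G]/I$ of the symmetric algebra on the group ring, so that a group homomorphism $\phi\co G\to H$ induces an algebra homomorphism $S(G;\Bbb{C})\to S(H;\Bbb{C})$ sending $[g]\mapsto[\phi(g)]$ (the generators $e-2$ and $g\otimes h-gh-gh^{-1}$ of the defining ideal are carried to the corresponding generators for $H$). Applying this to $\iota\co F_3\to G$ gives a surjective algebra homomorphism $\iota\co S(F_3;\Bbb{C})\to S(G;\Bbb{C})$, which is surjective because $G$ is generated by $g_1,g_2,g_3$ and $S(G;\Bbb{C})$ is spanned (as an algebra) by the classes $[g]$, $g\in G$, each of which is $\iota([u])$ for any $u\in F_3$ with $\bar u=g$.

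Next I would show $\ker\iota=\langle\,[u]-[v]\mid u,v\in F_3,\ \bar u=\bar v\,\rangle=:J$. The inclusion $J\subseteq\ker\iota$ is immediate since $\iota([u])=[\bar u]=[\bar v]=\iota([v])$. For the reverse inclusion, the cleanest route is to build an inverse to the induced map $\bar\iota\co S(F_3;\Bbb{C})/J\to S(G;\Bbb{C})$. Concretely, I would define an algebra homomorphism $\Psi\co SR[G]\to S(F_3;\Bbb{C})/J$ on generators by $[g]\mapsto [u]+J$ where $u\in F_3$ is any lift of $g$ (well-defined precisely because we quotiented by $J$), extended multiplicatively; then check that $\Psi$ kills the defining ideal of $S(G;\Bbb{C})$, i.e. $\Psi(e-2)=[1_{F_3}]-2\in\ker$ (it is $0$ in $S(F_3;\Bbb{C})$ already) and $\Psi(g\otimes h-gh-gh^{-1})=[u][w]-[uw]-[uw^{-1}]+J=0$ since this relation holds in $S(F_3;\Bbb{C})$ for the lifts $u,w$. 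Hence $\Psi$ descends to $\Psi\co S(G;\Bbb{C})\to S(F_3;\Bbb{C})/J$, and one checks $\Psi$ and $\bar\iota$ are mutually inverse on the generating classes $[g]$, so they are isomorphisms.

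The one genuine point requiring care — the "main obstacle", though a mild one — is the well-definedness of $\Psi$: I must confirm that for $g\in G$ any two lifts $u,v\in F_3$ with $\bar u=\bar v=g$ give $[u]-[v]\in J$, which is true essentially by definition of $J$, and moreover that this assignment respects the symmetric-algebra structure before passing to the quotient. Since $SR[G]$ is the symmetric algebra on the free $\Bbb{C}$-module $\Bbb{C}[G]$, it suffices to define $\Psi$ on the basis $\{g\}_{g\in G}$ of $\Bbb{C}[G]$ and extend freely, so no relations among the $g$'s need be checked at that stage — the only relations to verify are the two families cutting out $I$, as above. I would then remark that the isomorphism manifestly sends $[u]+J\mapsto[\bar u]$, so it is the map induced by $\iota$, completing the proof.
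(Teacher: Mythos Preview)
Your argument is correct. You establish the isomorphism by showing that the induced map $\iota\co S(F_3;\Bbb{C})\to S(G;\Bbb{C})$ is surjective and then identify its kernel as $J$ by constructing an explicit inverse $\Psi$ via a choice of lifts. The well-definedness check you flag is indeed the only point needing attention, and your handling of it (defining $\Psi$ on the free generators of the symmetric algebra and then verifying it annihilates the two families of skein relations) is sound.

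The paper takes a somewhat different, more ``upstairs'' route: rather than building $\Psi$, it works entirely at the level of the symmetric algebra $S\Bbb{C}[F_3]$, introducing the larger ideal $\bar I=\langle e-2,\ g\otimes h-gh-gh^{-1},\ u-v\rangle$ and applying the third isomorphism theorem twice---once to identify $S(F_3;\Bbb{C})/J$ with $S\Bbb{C}[F_3]/\bar I$, and once more (after extending $\iota$ to $S\Bbb{C}[F_3]\to S\Bbb{C}[G]$ and computing its kernel) to identify $S\Bbb{C}[F_3]/\bar I$ with $S(G;\Bbb{C})$. Your approach avoids juggling the intermediate ideal $\bar I$ and is arguably more direct; the paper's approach avoids any choice of lifts and keeps everything in terms of canonical quotient maps. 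Neither is deeper than the other---both are routine once the functoriality of $S(-;\Bbb{C})$ is clear.
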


\begin{proof} 
Let us consider the following two ideals of the symmetric algebra $S\Bbb{C}[F_3]$:
\begin{align*}
I &:= \langle e-2, g\otimes h - gh - gh^{-1}\ |\ g,h\in F_3\rangle, \\
\bar{I} &:= \langle e-2, g\otimes h - gh - gh^{-1}, u - v\ |\ g,h\in F_3,\ u,v\in F_3\mbox{\ s.t.\ }\bar u=\bar v\rangle.
\end{align*}
If we consider the canonical projection 
$$\pi \co S\Bbb{C}[F_3] \rightarrow S\Bbb{C}[F_3]/I = S(F_3;\Bbb{C}),$$ 
then we have
\begin{align*}
\bar I/I &= \pi(\bar I) = \langle \pi(u)-\pi(v)\ |\ u,v\in F_3\mbox{\ s.t.\ }\bar u=\bar v \rangle\\
&= \langle [u] - [v]\ |\ u, v\in F_3\mbox{\ s.t.\ }\bar{u}=\bar{v} \rangle.
\end{align*}
Thus by the third isomorphism theorem, we have
$$S(F_3;\Bbb{C})/\langle [u] - [v]\ |\ u, v\in F_3\mbox{\ s.t.\ }\bar{u}=\bar{v} \rangle =\left(S\Bbb{C}[F_3] / I \right) / \left( \bar{I}/I \right)\cong
S\Bbb{C}[F_3] / \bar{I}.$$
Hence it suffices to show that $S\Bbb{C}[F_3] / \bar{I} \cong S(G ; \Bbb{C})$. 

We extend the group homomorphism $\iota \co F_3\rightarrow G$ to the algebra homomorphism $\iota \co S\Bbb{C}[F_3] \rightarrow S\Bbb{C}[G]$. Then  $\iota$ induces the algebra isomorphism $\hat\iota \co S\Bbb{C}[F_3]/\ker\iota \rightarrow S\Bbb{C}[G]$, where the kernel $\ker\iota $ is the ideal generated by $u-v$, $u, v\in F_3$, such that $\bar{u}=\bar{v}$.
Then again by the third isomorphism theorem, we have
$$S\Bbb{C}[F_3]/\bar{I} \cong (S\Bbb{C}[F_3]/\ker\iota)/(\bar{I}/\ker\iota) \cong S\Bbb{C}[G]/ \hat\iota(\bar{I}/\ker\iota)\cong S(G;\Bbb{C}),$$
where the last identity is given by
$$\hat\iota(\bar I / \ker\iota) = \langle \bar e-2, \bar g\otimes\bar h - \bar g\bar h - \bar g\bar h^{-1}\ |\ g, h\in F_3 \rangle,$$
which is the defining relation of the skein algebra $S(G;\Bbb{C})$.
\end{proof}

\begin{proof}[Proof of Theorem \ref{Theorem:1}]
Recall from the introduction that the map $\Phi\co \Bbb{C}[x_1,\ldots,x_{123}] \to S(F_3;\Bbb{C})$ has the kernel generated by the polynomial $K$, i,e., we have the  isomorphism 
\begin{align*}
\hat{\Phi} \co \Bbb{C}[x_1, \ldots, x_{123}] / \langle K \rangle \rightarrow S(F_3; \Bbb{C}).
\end{align*}
Then by the third isomorphism theorem, we have
\begin{align*}
\Bbb{C}[x_1, \ldots, x_{123}] / I_G &\cong \left(\Bbb{C}[x_1, \ldots, x_{123}] / \langle K \rangle \right) / \left(I_G / \langle K \rangle \right),
\\
&\cong S(F_3; \Bbb{C}) / \langle\ [u] - [v]\ |\ u, v\in F_3\mbox{\ s.t.\ }\bar{u}=\bar{v}\ \rangle.
\end{align*}
By Lemma \ref{prop:15} the right hand side is isomorphic to $S(G ; \Bbb{C})$, which implies the assertion.
\end{proof}
\subsection{Basic identities in skein algebra}
The following lemma is well-known (cf.  \cite{horowitz, tran1}).

\begin{lem}\label{lem:skeineq}
Let $G$ be a group and $a, b, c\in G$ arbitrary elements. In $S(G;R)$, we have the following identities.
\begin{align*}
[a]&=[a^{-1}]\tag{1}\\
[ab]&=[ba]\tag{2}\\
[b]\otimes[ac]&=[abc]+[ab^{-1}c]\tag{3}\\
\tag{4}[abc]&=[a]\otimes[bc]+[b]\otimes[ac]+[c]\otimes[ab]-[a]\otimes[b]\otimes[c]-[acb]\\
\end{align*}
\end{lem}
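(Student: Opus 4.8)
The plan is to prove the four identities in the order stated, deriving each from the two defining relations of $S(G;R)$ — that $[e]=2$ and that $[g]\otimes[h]=[gh]+[gh^{-1}]$ for $g,h\in G$ — together with the identities already established. For (1), I would apply the second defining relation with $g=e$, $h=a$ to get $[e]\otimes[a]=[a]+[a^{-1}]$ and then substitute $[e]=2$, so that $2[a]=[a]+[a^{-1}]$ and hence $[a]=[a^{-1}]$. For (2), I would use that the symmetric algebra $SR[G]$ is commutative, so $[g]\otimes[h]=[h]\otimes[g]$; expanding both sides by the second defining relation gives $[gh]+[gh^{-1}]=[hg]+[hg^{-1}]$, and since $hg^{-1}=(gh^{-1})^{-1}$ identity (1) shows $[hg^{-1}]=[gh^{-1}]$, so these cancel and $[gh]=[hg]$. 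In particular $[\,\cdot\,]$ is invariant under cyclic permutation of a product, which I would use freely afterwards.

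For (3), I would apply the second defining relation with $g=b$ and $h=ca$ (choosing $ca$ rather than $ac$ to simplify the rewriting), getting
\[
[b]\otimes[ca]=[bca]+[b(ca)^{-1}]=[bca]+[ba^{-1}c^{-1}].
\]
Then (2) gives $[ca]=[ac]$ on the left and $[bca]=[abc]$, while (1) and (2) give $[ba^{-1}c^{-1}]=[(ba^{-1}c^{-1})^{-1}]=[cab^{-1}]=[ab^{-1}c]$, so $[b]\otimes[ac]=[abc]+[ab^{-1}c]$.

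For (4), I would expand each of the first three terms on the right-hand side by (3),
\[
[a]\otimes[bc]=[bac]+[ba^{-1}c],\qquad [b]\otimes[ac]=[abc]+[ab^{-1}c],\qquad [c]\otimes[ab]=[acb]+[ac^{-1}b],
\]
and expand the triple product by first applying the defining relation to $[b]\otimes[c]=[bc]+[bc^{-1}]$ and then (3) twice:
\[
[a]\otimes[b]\otimes[c]=[bac]+[ba^{-1}c]+[bac^{-1}]+[ba^{-1}c^{-1}].
\]
Substituting these into the right-hand side of (4) and cancelling the terms $[bac]$, $[ba^{-1}c]$ and $[acb]$ that appear with both signs leaves
\[
[abc]+\bigl([ac^{-1}b]-[bac^{-1}]\bigr)+\bigl([ab^{-1}c]-[ba^{-1}c^{-1}]\bigr);
\]
finally $[bac^{-1}]=[ac^{-1}b]$ by cyclic invariance (2), and $[ba^{-1}c^{-1}]=[(ba^{-1}c^{-1})^{-1}]=[cab^{-1}]=[ab^{-1}c]$ by (1) and (2), so both bracketed differences vanish and the right-hand side equals $[abc]$.

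There is no real obstacle here: the whole lemma is elementary manipulation of the defining relations. The only points requiring care are the logical order — each identity genuinely relies on the earlier ones — and the bookkeeping in the proof of (4), where one must correctly pair each leftover word with its cyclic-permutation-and-inverse partner.
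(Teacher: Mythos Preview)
Your proof is correct and follows essentially the same approach as the paper: identities (1)--(3) are derived exactly as in the paper (up to cosmetic rearrangement), and for (4) the only difference is that you expand the right-hand side and simplify to $[abc]$, whereas the paper starts from $[abc]$ and expands it step by step to reach the right-hand side. Both directions amount to the same elementary bookkeeping with the defining relations.
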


\begin{proof}We have
\begin{align*}
[a] &= 2 [a] - [a] = [e]\otimes[a] - [a] = ([e\cdot a] + [e\cdot a^{-1}]) -[a] =  [a^{-1}],\\
[ab] &= [a]\otimes[b] - [ab^{-1}] = [b]\otimes[a] - [{(ab^{-1})}^{-1}]\\
&= [b]\otimes[a] - [ba^{-1}] = [b]\otimes[a] - ([b]\otimes[a]- [ba]) = [ba],\\ 
[b]\otimes[ac] &= [ca]\otimes[b] = [cab] + [ca{b}^{-1}] = [abc] + [ab^{-1}c],\\
[abc] &= [a(bc)] = [a]\otimes[bc] - [ac^{-1}b^{-1}] = [a]\otimes[bc] - ([ac^{-1}]\otimes[b] - [ac^{-1}b])\\
&= [a]\otimes[bc] - (-[ac]\otimes[b] + [a]\otimes[c]\otimes[b]) - [acb] + [ab]\otimes[c]\\
&=  [a]\otimes[bc] +[b]\otimes[ac] + [c]\otimes[ab] - [a]\otimes[b]\otimes[c] - [acb].\\
\end{align*}
\end{proof}

\subsection{Proof of Theorem \ref{Theorem:2}}\label{FS_4}
We prove Theorem \ref{Theorem:2}. 
Let
$$G=\langle g_1, g_2, g_3\ |\ \alpha=\beta, \gamma=\delta\rangle,$$
and consider the ideal 
$$\bar I_G := \langle K, P_{\alpha g}- P_{\beta g},  P_{\gamma g}-P_{\delta g}\ |\ g=g_1^{i_1}g_2^{i_2}g_3^{i_3}, 0\leq i_1, i_2, i_3\leq1\rangle \subset \Bbb{C}[x_1,\ldots,x_{123}].$$
Note that Theorem \ref{Theorem:2} is equivalent to $I_G=\bar I_G$.

Since generators of $\bar I_G$ are all contained in $I_G$, it suffices to show that $\bar I_G\supset I_G$. Since $K\in\bar I_G$, it is enough to prove the following lemma.

\begin{lem}\label{genlem:2}
For $u,v\in F_3$ such that $\bar u=\bar v$ and for $P_u\in \Phi^{-1}([u]), P_v\in \Phi^{-1}([v])$, we have $P_u- P_v \in\bar I_G$.
\end{lem}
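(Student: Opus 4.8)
The plan is to reduce the general statement ``$P_u - P_v \in \bar I_G$ for all $u,v$ with $\bar u = \bar v$'' to the finitely many generators of $\bar I_G$ by an induction on how the equality $\bar u = \bar v$ is derived from the relators. Since $\bar u = \bar v$ in $G$ means $uv^{-1}$ lies in the normal closure of $\{\alpha\beta^{-1}, \gamma\delta^{-1}\}$ in $F_3$, it suffices to handle the case where $v$ is obtained from $u$ by a single insertion of a conjugate $w r w^{-1}$ of one relator $r \in \{\alpha\beta^{-1}, \gamma\delta^{-1}\}$ (or its inverse), i.e.\ $u = u_1 u_2$ and $v = u_1 (w r w^{-1}) u_2$; the general case then follows by composing finitely many such steps, using that $\bar I_G$ is an ideal (so differences telescope). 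Thus the core claim becomes: for any $u_1, u_2, w \in F_3$, and $r = \alpha\beta^{-1}$ (the $\gamma\delta^{-1}$ case being identical), $P_{u_1 u_2} - P_{u_1 w\alpha\beta^{-1}w^{-1} u_2} \in \bar I_G$.

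Next I would work entirely in $S(F_3;\Bbb{C}) \cong \Bbb{C}[x_1,\dots,x_{123}]/\langle K\rangle$, since modulo $K$ the polynomial $P_x$ is a well-defined representative of $[x]$; so it is enough to prove $[u_1 u_2] - [u_1 w\alpha\beta^{-1}w^{-1} u_2] \in \bar I_G / \langle K \rangle$. Using the trace identity (2) of Lemma~\ref{lem:skeineq}, $[u_1 u_2] = [u_2 u_1]$, so by relabeling I can assume the inserted conjugate sits at the front: I must show $[h] - [h'] \in \bar I_G/\langle K\rangle$ where $h = t$ and $h' = w\alpha\beta^{-1}w^{-1} t$ for suitable $t$. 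The key tool is identity (3): $[b]\otimes[ac] = [abc] + [ab^{-1}c]$, which lets me ``pull out'' a generator $g_i$ from a word as an $x$-variable times shorter words plus a correction. Iterating (3) together with (2), (4), and $[a]=[a^{-1}]$, any $[word]$ in $S(F_3;\Bbb{C})$ can be written as a polynomial in the seven generators $x_1,\dots,x_{123}$ whose ``linear part'' — the piece not yet expressed through the short generators — is controlled. The real mechanism is that inserting $\alpha\beta^{-1}$ should, after applying the skein identities, produce exactly a $\Bbb{C}[x]$-linear combination of the basic generators $P_{\alpha g} - P_{\beta g}$ with $g = g_1^{i_1}g_2^{i_2}g_3^{i_3}$: the point of restricting to exponents $0 \le i_j \le 1$ is that identity (3) lets one reduce any $g_i^{\pm k}$-tail to words involving only $g_i^{0}$ and $g_i^{1}$ by recursion (using $[g_i^{k}] $-type relations and $[ag_i^k b] = [g_i]\otimes[ag_i^{k-1}b] - [ag_i^{k-2}b]$), and similarly the conjugating element $w$ can be absorbed because $[w\alpha\beta^{-1}w^{-1}t]$ expands, via (3) applied to the letters of $w$, into a $\Bbb{C}[x]$-combination of terms $[\alpha g] - [\beta g]$ for various short $g$.

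Concretely the induction I would run is a double induction: an outer induction on the syllable length of the conjugator $w$ (stripping one letter $g_i^{\pm 1}$ at a time off $w$ using identity (3), each step introducing a difference of two ``shorter-conjugator'' terms, which lands in $\bar I_G$ by the inductive hypothesis up to a remaining $[\alpha g] - [\beta g]$ term), and an inner induction on the length of the tail word $t$ together with the exponents appearing in it, reducing via (2), (3), (4) to the base words $g = g_1^{i_1}g_2^{i_2}g_3^{i_3}$ with $0\le i_j\le 1$, for which $P_{\alpha g} - P_{\beta g}$ is by definition a generator of $\bar I_G$. At each reduction step the ``error terms'' produced are either (i) differences $P_{u'} - P_{v'}$ with $\bar u' = \bar v'$ coming from a strictly shorter derivation, hence in $\bar I_G$ by the overall induction, or (ii) multiples by elements of $\Bbb{C}[x_1,\dots,x_{123}]$ of the listed generators, hence in $\bar I_G$ because it is an ideal; and everything is taken modulo $K$, which is harmless since $K \in \bar I_G$.

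The main obstacle I anticipate is the bookkeeping in step where one shows that pulling letters off $w$ and reducing the tail $t$ really does close up into $\Bbb{C}[x]$-combinations of the \emph{finitely many} generators $P_{\alpha g}-P_{\beta g}$, $g = g_1^{i_1}g_2^{i_2}g_3^{i_3}$ — i.e.\ verifying that no new ``type'' of obstruction is created, so that the induction terminates. This amounts to checking that the set $\{[\alpha g]-[\beta g] : g = g_1^{i_1}g_2^{i_2}g_3^{i_3},\ 0\le i_j\le 1\}$ is closed, modulo $\bar I_G$, under multiplication by each $x_i$ and under the operations coming from identities (2)--(4); phrased differently, one needs a ``spanning'' lemma: for every $g \in F_3$, $[\alpha g]-[\beta g]$ lies in the $\Bbb{C}[x_1,\dots,x_{123}]$-submodule of $\Bbb{C}[x]/\langle K\rangle$ generated by those sixteen (eight per relator) distinguished differences. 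Establishing that spanning statement — essentially an explicit computation propagating the skein relations through one relator insertion — is the technical heart, and is presumably exactly what the authors carry out in the remainder of this subsection.
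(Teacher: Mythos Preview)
Your proposal is essentially the same strategy as the paper's: reduce to a single relator insertion, use cyclic invariance to bring the relator to the front, and then invoke a ``spanning lemma'' asserting $P_{\alpha g}-P_{\beta g}\in\bar I_G$ for every $g\in F_3$, proved by induction on the word $g$ via the identities (2)--(4). That spanning lemma is exactly Lemma~\ref{genelem:1}, and the paper's case analysis there (cases 1--5) is precisely your ``inner induction'' on the tail.

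One small point where the paper is cleaner: you propose an outer induction on the syllable length of the conjugator $w$, peeling letters off with identity~(3). The paper avoids this entirely. After rotating so that the inserted conjugate sits at the front, one more cyclic rotation gives $[w\alpha\beta^{-1}w^{-1}t]=[\alpha\,(\beta^{-1}w^{-1}tw)]$; and since $[t]=[w^{-1}tw]=[\beta\,(\beta^{-1}w^{-1}tw)]$, both sides are already of the form $[\alpha g]$ and $[\beta g]$ with the \emph{same} $g=\beta^{-1}w^{-1}tw$. This is the content of the paper's definition of $c_j$ and the observations $[u]=[a_1c_1]$, $[v]=[b_mc_m]$, $[b_jc_j]=[a_{j+1}c_{j+1}]$. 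So no separate induction on $w$ is needed; the conjugator is absorbed into the tail in one step, and everything funnels directly into Lemma~\ref{genelem:1}.
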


\begin{proof}
Take $u,v\in F_3$ such that $\bar u=\bar v$. Since $G$ is the quotient of the free group $F_3$ by its normal subgroup
 $\langle \ h\alpha\beta^{-1}h^{-1}, h'\gamma\delta^{-1}h'^{-1} \ |\ h, h' \in F_3 \ \rangle,$ we can write 
$$u = \left(\prod_{i=1}^m h_i a_i b_i^{-1}h_i^{-1}\right)v$$ for $h_i\in F_3$ and $(a_i, b_i)\in\{(\alpha, \beta), (\beta, \alpha), (\gamma, \delta), (\delta, \gamma)\}$, $i=1,\ldots, m$. 

We reduce the assertion to the following claim:
\begin{claim}
For $j=1,\ldots,m$, set
\begin{eqnarray*}
c_j:=\left\{ \begin{array}{ll}
b_j^{-1}h_j^{-1}\left(\prod_{i=j+1}^m h_ia_ib_i^{-1}h_i^{-1}\right) v h_j & (1\leq j\leq m-1), \\
b_m^{-1}h_m^{-1}vh_m& (j=m). \\
\end{array} \right.
\end{eqnarray*}
For $P_u, P_v, P_{a_1c_1}$ and $P_{b_mc_m}$ we have 
\begin{itemize}
\item[\rm{(a)}] $P_u - P_{a_1c_1} \in \bar I_G$\\
\item[\rm{(b)}] $P_{a_1c_1} - P_{b_mc_m}\in \bar I_G$\\
\item[\rm{(c)}] $P_{b_mc_m} - P_v \in \bar I_G$\\
\end{itemize}
\end{claim}
We first prove (a) and (c). 
Note that 
\begin{align*}
[u] &= \left[\left( \prod_{i=1}^{m} h_{i} a_i b_i^{-1} h_i^{-1} \right) v\right]\\
&= \left[a_1b_1^{-1}h_1^{-1}\left(\prod_{i=2}^{m} h_{i} a_i b_i^{-1} h_i^{-1}\right)vh_1\right]\\
&= [a_1c_1],\\
& \\
[v] &= [h_m^{-1}vh_m]\\
&= [b_mb_m^{-1}h_m^{-1}vh_m]\\
&= [b_mc_m].
\end{align*}
Thus we have
\begin{align*}
\Phi(P_u - P_{a_1c_1}) &= [u]- [a_1c_1] = 0,\\
\Phi(P_{b_mc_m} - P_v) &= [b_mc_m] - [v] =0,
\end{align*}
which imply
\begin{align*}
P_u - P_{a_1c_1}&\in\ker\Phi=\langle K\rangle\subset\bar I_G,
\\ P_{b_mc_m} - P_{v}&\in\ker\Phi=\langle K\rangle\subset\bar I_G.
\end{align*}\\

In order to prove (b), we use the following claim.
\begin{claim}
For $P_{a_jc_j}, P_{b_jc_j}$ and $P_{a_{j+1}c_{j+1}}$ we have
\begin{itemize}
\item[\rm{(b1)}] $P_{a_jc_j} - P_{b_jc_j} \in\bar I_G$\ for $1\leq j\leq m$.\\
\item[\rm{(b2)}] $P_{b_jc_j} - P_{a_{j+1}c_{j+1}}\in\bar I_G$\ for $1\leq j\leq m-1$.
\end{itemize}
\end{claim}
By combining (b1) and (b2) we obtain (b);
$$P_{a_1c_1} - P_{b_mc_m} = \left(\sum_{j=1}^{m} P_{a_jc_j} - P_{b_jc_j}\right) + \left(\sum_{j=1}^{m-1}P_{b_jc_j} - P_{a_{j+1}c_{j+1}}\right)\in\bar I_G.$$
As for (b1), the proof is given under more general condition in Lemma \ref{genelem:1} below. 

We prove (b2). When $1\leq j\leq m-2$, we have
\begin{align*}
[b_jc_j]&= \left[h_j^{-1}\left(\prod_{i=j+1}^m h_ia_ib_i^{-1}h_i^{-1}\right) v h_j\right]\\
&= \left[\left(\prod_{i=j+1}^m h_ia_ib_i^{-1}h_i^{-1}\right)v\right]\\
&=\left[h_{j+1}a_{j+1}b_{j+1}^{-1}h_{j+1}^{-1}\left(\prod_{i=j+2}^m h_ia_ib_i^{-1}h_i^{-1}\right) v\right]\\
&= \left[a_{j+1}b_{j+1}^{-1}h_{j+1}^{-1}\left(\prod_{i=j+2}^m h_ia_ib_i^{-1}h_i^{-1}\right) v h_{j+1}\right]\\
&= [a_{j+1}c_{j+1}]
\end{align*}
and when $j=m-1$, we have
\begin{align*}
[b_{m-1}c_{m-1}] &= [h_{m-1}^{-1}h_ma_mb_m^{-1}h_m^{-1}vh_{m-1}]\\
&= [h_ma_mb_m^{-1}h_m^{-1}v]\\
&= [a_mb_m^{-1}h_m^{-1}vh_m]\\
&= [a_mc_m].
\end{align*}
Thus for $1\leq j\leq m-1$ we have $[b_jc_j]=[a_{j+1}c_{j+1}]$ and therefore
$$P_{b_jc_j} - P_{a_{j+1}c_{j+1}}\in\ker\Phi=\langle K\rangle\subset\bar I_G.$$
\end{proof}

\begin{lem}\label{genelem:1}
For any $g\in F_3$ we have $P_{\alpha g} -P_{\beta g},  P_{\gamma g} -P_{\delta g} \in\bar I_G$.
\end{lem}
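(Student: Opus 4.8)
The plan is to reduce everything to a statement inside $S(F_3;\Bbb{C})$ and then run an induction on word length. Since $\langle K\rangle\subseteq\bar I_G$, and for any two choices of $\Phi$-preimages the representatives of $P_{\alpha g}-P_{\beta g}$ differ by an element of $\ker\Phi=\langle K\rangle$, the assertion ``$P_{\alpha g}-P_{\beta g}\in\bar I_G$'' depends only on the class $[\alpha g]-[\beta g]\in S(F_3;\Bbb{C})$ and on the ideal $\bar J_G:=\bar I_G/\langle K\rangle\subseteq S(F_3;\Bbb{C})\cong\Bbb{C}[x_1,\ldots,x_{123}]/\langle K\rangle$. So I would set $D(g):=[\alpha g]-[\beta g]$, $D'(g):=[\gamma g]-[\delta g]$ and $\mathcal B:=\{g_1^{i_1}g_2^{i_2}g_3^{i_3}\mid 0\le i_1,i_2,i_3\le1\}$, note that $\bar J_G=\langle D(h),D'(h)\mid h\in\mathcal B\rangle$, and prove that $D(g),D'(g)\in\bar J_G$ for every $g\in F_3$. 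By the evident symmetry it suffices to treat $D$; in fact I will show $D(g)\in\sum_{h\in\mathcal B}S(F_3;\Bbb{C})\,D(h)$.

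First I would record two identities extracted from Lemma \ref{lem:skeineq}. Writing Lemma \ref{lem:skeineq}(3) for $[\alpha ug_iv]$ and $[\alpha ug_i^{-1}v]$, repeating with $\beta$ in place of $\alpha$, and subtracting gives, for all $u,v\in F_3$ and all $i$,
\[
D(ug_iv)+D(ug_i^{-1}v)=[g_i]\otimes D(uv).
\]
Applying Lemma \ref{lem:skeineq}(4) with $a=v\alpha u$, $b=g_i$, $c=g_j$, using (2) to cyclically rearrange, and subtracting the $\beta$-version gives
\[
D(ug_ig_jv)+D(ug_jg_iv)=D(uv)\otimes[g_ig_j]+[g_i]\otimes D(ug_jv)+[g_j]\otimes D(ug_iv)-D(uv)\otimes[g_i]\otimes[g_j].
\]
In the first identity, $D(ug_iv)$ is expressed through $D(uv)$ (a shorter word) and $D(ug_i^{-1}v)$ (a word with one letter's sign flipped); in the second, every $D$ on the right is evaluated at a strictly shorter word, while the second summand on the left is $D$ of the word with the two letters $g_i,g_j$ transposed.

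Then I would induct on the triple $(\ell(g),\nu(g),N(g))$ ordered lexicographically, where $\ell(g)$ is the length of the reduced word $g$, $\nu(g)$ the number of occurrences of inverse generators in it, and, when $\nu(g)=0$, $N(g)$ the number of pairs of positions $p<q$ for which the index of the $p$-th letter is $\ge$ the index of the $q$-th letter (and $N(g):=0$ when $\nu(g)>0$). The base case $\ell(g)=0$ is $g=e\in\mathcal B$. If $\nu(g)\ge1$, write $g=ug_i^{-1}v$ with $g$ reduced and use the first identity: $D(g)=[g_i]\otimes D(uv)-D(ug_iv)$, where $\ell(uv)<\ell(g)$ and the reduced form of $ug_iv$ is lexicographically smaller — it is either strictly shorter, or of the same length with one fewer inverse occurrence, since free reduction never raises $\nu$. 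If $\nu(g)=0$ and $N(g)=0$, then $g$ is a strictly increasing positive word in $g_1,g_2,g_3$, hence $g\in\mathcal B$. If $\nu(g)=0$ and $N(g)\ge1$, pick adjacent positions $p,p+1$ whose letters $g_i,g_j$ have $i\ge j$: when $i=j$ the first identity (with $u\mapsto ug_i$) yields $D(g)=[g_i]\otimes D(ug_iv)-D(uv)$, both words shorter and positive; when $i>j$ the second identity expresses $D(g)$ through $D(ug_jg_iv)$ — same length but $N$ smaller by one — and through $D$ of shorter positive words. In all cases $D(g)$, and likewise $D'(g)$, is an $S(F_3;\Bbb{C})$-combination of the $D(h)$ (resp. $D'(h)$), $h\in\mathcal B$, hence lies in $\bar J_G$; translating back gives $P_{\alpha g}-P_{\beta g},\ P_{\gamma g}-P_{\delta g}\in\bar I_G$.

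The only genuine work is in the last step: selecting a complexity measure on which both skein identities act as strict reductions, and verifying the easy but necessary combinatorial facts (free reduction cannot increase $\nu$; $N(g)=0$ forces a strictly increasing word, hence $g\in\mathcal B$; an adjacent transposition of unequal letters decreases $N$ by exactly one, so that $N(g)\ge1$ always exhibits an adjacent descent or tie). I do not expect a conceptual obstacle beyond this bookkeeping; all the content sits in having the two identities above in the right form.
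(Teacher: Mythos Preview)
Your proof is correct and rests on the same two skein identities the paper uses (Lemma~\ref{lem:skeineq}(3) for changing an exponent's sign and Lemma~\ref{lem:skeineq}(4) for transposing two letters), but the organisation is genuinely different. The paper stays in $\Bbb{C}[x_1,\ldots,x_{123}]$, constructing explicit preimages $Q_{\alpha g},Q_{\beta g}$ case by case according to the block decomposition $g=g_{i_1}^{m_1}\cdots g_{i_r}^{m_r}$: it handles $r\le 3$ with all distinct $i_j$ by a long list of subcases, and for $r\ge4$ (or $r=3$ with a repeated index) finds $s_1<s_2$ with $i_{s_1}=i_{s_2}$, splits $g=XYZ$, and reduces the block count via $[\alpha XYZ]=[Y]\otimes[\alpha XZ]-[\alpha XY^{-1}Z]$. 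You instead pass to $S(F_3;\Bbb{C})=\Bbb{C}[x_1,\ldots,x_{123}]/\langle K\rangle$, which turns the statement into one about the single well-defined function $D(g)=[\alpha g]-[\beta g]$ and lets a uniform lexicographic induction on $(\ell,\nu,N)$ replace the entire case analysis. Your route is shorter and avoids carrying explicit polynomials; the paper's route, while longer, produces concrete $Q$-polynomials at each step, which aligns with its later goal of computing the Borromean generators explicitly.
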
 

\begin{proof} 
Let $P_x$ denote an arbitrary fixed element in $\Phi^{-1}([x])$. We show $P_{\alpha g} -P_{\beta g}\in\bar I_G$. One can prove $P_{\gamma g} -P_{\delta g} \in\bar I_G$ in a similar way. Note that if there exist certain polynomials $Q_{\alpha g}\in \Phi^{-1}([\alpha g])$ and $Q_{\beta g}\in\Phi^{-1}([\beta g])$ such that $Q_{\alpha g} - Q_{\beta g}\in\bar I_G$, then we have
\begin{align*} \label{pq}
P_{\alpha g} - P_{\beta g} =  \left(P_{\alpha g} -Q_{\alpha g}\right) + \left(Q_{\alpha g} - Q_{\beta g}\right) + \left(Q_{\beta g} - P_{\beta g}\right)\in\bar I_G,
\end{align*}
since $P_{\alpha g} -Q_{\alpha g}, Q_{\beta g} - P_{\beta g}  \in\ker\Phi=\langle K\rangle\subset\bar I_G.$
In what follows we construct such $Q_{\alpha g}$ and $Q_{\beta g}$ for each $g=g_{i_1}^{m_1} \cdots g_{i_r}^{m_r}$, where  $g_{i_j} \in \{g_1, g_2, g_3\}$ and $m_j \in \Bbb{Z} \setminus \{0\}$. We assume that $g_{i_j}\neq g_{i_{j+1}}$ for $1\leq j \leq r-1$.
 \\

\noindent\fbox{case 1. $r=0$}\\
Let $g = e$. Then we have $P_{\alpha g} - P_{\beta g}=P_\alpha - P_\beta\in\bar I_G$.\\

\noindent\fbox{case 2. $r=1$}\\
Let $g = g_{i_1}^{m_1}$ for $m_1 \in \Bbb{Z} \setminus \{0\}$.\\

\begin{itemize}
\item subcase 2-1. $m_1 = 1$\\
In this case by the definition of $\bar I_G$ we have $P_{\alpha g_{i_1}} - P_{\beta g_{i_1}}\in\bar I_G$.\\
	
\item subcase 2-2. $m_1>1$\\
We use induction on $m_1$. Set
\begin{align*}
Q_{\alpha g_{i_1}^{m_1}} &:= P_{\alpha g_{i_1}^{m_1 - 1}}x_{i_1} - P_{\alpha g_{i_1}^{m_1 - 2}},\\
Q_{\beta g_{i_1}^{m_1}} &:= P_{\beta g_{i_1}^{m_1 - 1}} x_{i_1} - P_{\beta g_{i_1}^{m_1 - 2}},
\end{align*}
so that we have $\Phi(Q_{\alpha g_{i_1}^{m_1}}) = [\alpha g_{i_1}^{m_1}]$ and $\Phi(Q_{\beta g_{i_1}^{m_1}}) = [\beta g_{i_1}^{m_1}]$, which follows from the defining relation $[gh]=[g]\otimes [h]  - [gh^{-1}]$ in the skein algebra.
Then we have
\begin{align*}
Q_{\alpha g_{i_1}^{m_1}} - Q_{\beta g_{i_1}^{m_1}} &= ( P_{\alpha g_{i_1}^{m_1 - 1}} - P_{\beta g_{i_1}^{m_1 - 1}}) x_{i_1} - ( P_{\alpha g_{i_1}^{m_1 - 2}} - P_{\beta g_{i_1}^{m_1 - 2}} ) \in \bar I_G,
\end{align*}
where the assertion for $m_1=2$ follows  from the case 1 and the subcase 2-1, that
for $m_1=3$  follows  from the  subcase 2-1 and the case $m_1=2$, and that
for $m_1\geq 4$   follows from the induction on $m_1$.
\\

\item suncase 2-3. $m_1 < 0$\\
We use induction on $|m_1|$. Set
\begin{align*}
Q_{\alpha g_{i_1}^{m_1}} &:= P_{\alpha g_{i_1}^{m_1 + 1}}x_{i_1} - P_{\alpha g_{i_1}^{m_1 + 2}},\\
Q_{\beta g_{i_1}^{m_1}} &:= P_{\beta g_{i_1}^{m_1 + 1}} x_{i_1} - P_{\beta g_{i_1}^{m_1 + 2}},
\end{align*}
and the proof is similar to the subcase 2-2.
\end{itemize}

\noindent\fbox{case 3. $r=2$}\\
Let  $g = g_{i_1}^{m_1} g_{i_2}^{m_2}$ for $m_1, m_2 \in \Bbb{Z} \setminus \{0\}$.\\ 

\begin{itemize}
\item subcase 3-1. $m_1=1, m_2=1, 1\leq i_1 < i_2\leq 3$\\
In this case by the definition of $\bar I_G$ we have $P_{\alpha g_{i_1} g_{i_2}} - P_{\beta g_{i_1} g_{i_2}} \in\bar I_G$.\\

\item subcase 3-2. $m_1=1, m_2=1, 1\leq i_2 < i_1\leq 3$\\
Set
\begin{align*}
Q_{\alpha g_{i_1} g_{i_2}} &:= P_{\alpha} x_{i_1i_2} + x_{i_1} P_{\alpha g_{i_2}} + x_{i_2} P_{\alpha g_{i_1}} - P_{\alpha} x_{i_1} x_{i_2} - P_{\alpha g_{i_2} g_{i_1}},\\
Q_{\beta g_{i_1} g_{i_2}} &:= P_{\beta} x_{i_1i_2} + x_{i_1} P_{\beta g_{i_2}} + x_{i_2} P_{\beta g_{i_1}} - P_{\beta} x_{i_1} x_{i_2} - P_{\beta g_{i_2} g_{i_1}},
\end{align*}
so that we have $\Phi(Q_{\alpha g_{i_1} g_{i_2}}) = [\alpha g_{i_1} g_{i_2}]$ and $\Phi(Q_{\beta g_{i_1} g_{i_2}}) = [\beta g_{i_1} g_{i_2}]$, which follows from (4) in Lemma \ref{lem:skeineq}. Then we have
\begin{align*}
Q_{\alpha g_{i_1} g_{i_2}} - Q_{\beta g_{i_1} g_{i_2}} &= ( P_{\alpha} - P_{\beta} ) x_{i_1i_2} + ( P_{\alpha g_{i_2}} - P_{\beta g_{i_2}} ) x_{i_1}\\
&\quad + ( P_{\alpha g_{i_1}} - P_{\beta g_{i_1}}) x_{i_2} - ( P_{\alpha} - P_{\beta}) x_{i_1} x_{i_2}\\
&\quad - ( P_{\alpha g_{i_2} g_{i_1}} - P_{\beta g_{i_2} g_{i_1}}) \in\bar I_G.
\end{align*}

\item subcase 3-3. $m_1=1, m_2 >1$\\
We use induction on $m_2$. Set
\begin{align*}
Q_{\alpha g_{i_1}g_{i_2}^{m_2}} &:= P_{\alpha g_{i_1}g_{i_2}^{m_2 - 1}} x_{i_2} - P_{\alpha g_{i_1}g_{i_2}^{m_2 - 2}},\\
Q_{\beta g_{i_1}g_{i_2}^{m_2}} &:= P_{\beta g_{i_1}g_{i_2}^{m_2 - 1}} x_{i_2} - P_{\beta g_{i_1}g_{i_2}^{m_2 - 2}},
\end{align*}
so that we have $\Phi(Q_{\alpha g_{i_1}g_{i_2}^{m_2}}) = [\alpha g_{i_1}g_{i_2}^{m_2}]$ and $\Phi(Q_{\beta g_{i_1}g_{i_2}^{m_2}}) = [\beta g_{i_1}g_{i_2}^{m_2}]$, which follows from the defining relation $[gh]=[g]\otimes [h]  - [gh^{-1}]$ in the skein algebra. Then we have 
\begin{align*}
    Q_{\alpha g_{i_1}g_{i_2}^{m_2}} - Q_{\beta g_{i_1}g_{i_2}^{m_2}} = &( P_{\alpha g_{i_1}g_{i_2}^{m_2 - 1}} - P_{\beta g_{i_1}g_{i_2}^{m_2 - 1}} ) x_{i_2}\\
    &- ( P_{\alpha g_{i_1}g_{i_2}^{m_2 - 2}} - P_{\beta g_{i_1}g_{i_2}^{m_2 - 2}} ) \in\bar I_G,
\end{align*}
where the assertion for $m_2=2$ follows from the case 2 and subcases 3-1 and 3-2, that for $m_2=3$ follows from the subcases 3-1 and 3-2 and the case $m_2=2$, and that for $m_2\geq4$ follows from the induction on $m_2$. \\

\item subcase 3-4. $m_1=1, m_2 <0$\\
We use induction on $|m_2|$. Set 
\begin{align*}
Q_{\alpha g_{i_1}g_{i_2}^{m_2}} &:= P_{\alpha g_{i_1}g_{i_2}^{m_2 + 1}} x_{i_2} - P_{\alpha g_{i_1}g_{i_2}^{m_2 + 2}},\\
Q_{\beta g_{i_1}g_{i_2}^{m_2}} &:= P_{\beta g_{i_1}g_{i_2}^{m_2 + 1}} x_{i_2} - P_{\beta g_{i_1}g_{i_2}^{m_2 + 2}},
\end{align*}
and the proof is similar to the subcase 3-3.

\item subcase 3-5. $m_1>1, m_2\in\Bbb{Z}\setminus\{0\}$\\
We use induction on $m_1$. Set
\begin{align*}
Q_{\alpha g_{i_1}^{m_1}g_{i_2}^{m_2}} &:= P_{\alpha g_{i_1}^{m_1 - 1}g_{i_2}^{m_2}} x_{i_1} - P_{\alpha g_{i_1}^{m_1 - 2}g_{i_2}^{m_2}},\\
Q_{\beta g_{i_1}^{m_1}g_{i_2}^{m_2}} &:= P_{\beta g_{i_1}^{m_1 - 1}g_{i_2}^{m_2}} x_{i_1} - P_{\beta g_{i_1}^{m_1 - 2}g_{i_2}^{m_2}},
\end{align*} 
so that we have $\Phi(Q_{\alpha g_{i_1}^{m_1}g_{i_2}^{m_2}})=[\alpha g_{i_1}^{m_1}g_{i_2}^{m_2}]$ and $\Phi(Q_{\beta g_{i_1}^{m_1}g_{i_2}^{m_2}})=[\beta g_{i_1}^{m_1}g_{i_2}^{m_2}]$, which follows from (3) in Lemma \ref{lem:skeineq}. Then we have
\begin{align*}
    Q_{\alpha g_{i_1}^{m_1}g_{i_2}^{m_2}} - Q_{\beta g_{i_1}^{m_1}g_{i_2}^{m_2}} &= (P_{\alpha g_{i_1}^{m_1 - 1}g_{i_2}^{m_2}} - P_{\beta g_{i_1}^{m_1 - 1}g_{i_2}^{m_2}}) x_{i_1}\\
    &\quad - (P_{\alpha g_{i_1}^{m_1 - 2}g_{i_2}^{m_2}} - P_{\beta g_{i_1}^{m_1 - 2}g_{i_2}^{m_2}})\in\bar I_G,
\end{align*}
where the assertion for $m_1=2$ follows from case 2 and subcases 3-1 through 3-4, that for $m_1=3$ follows from subcases 3-1 through 3-4 and the case $m_1=2$, and that for $m_1\geq4$ follows from the induction on $m_1$.

\item subcase 3-6. $m_1<0, m_2\in\Bbb{Z}\setminus\{0\}$\\
We use induction on $|m_1|$. Set
\begin{align*}
Q_{\alpha g_{i_1}^{m_1}g_{i_2}^{m_2}} &:= P_{\alpha g_{i_1}^{m_1 + 1}g_{i_2}^{m_2}} x_{i_1} - P_{\alpha g_{i_1}^{m_1 + 2}g_{i_2}^{m_2}},\\
Q_{\beta g_{i_1}^{m_1}g_{i_2}^{m_2}} &:= P_{\beta g_{i_1}^{m_1 + 1}g_{i_2}^{m_2}} x_{i_1} - P_{\beta g_{i_1}^{m_1 + 2}g_{i_2}^{m_2}},
\end{align*} 
and the proof is similar to the subcase 3-5.

\end{itemize}

\noindent\fbox{case 4. $r=3, i_1\neq i_3$}\\
Let $g =g_{i_1}^{m_1}g_{i_2}^{m_2}g_{i_3}^{m_3}$ for $i_1\neq i_3$, $m_1, m_2, m_3 \in \Bbb{Z} \setminus \{0\}$. \\

\begin{itemize}
\item subcase 4-1. $m_1 = m_2 = m_3=1$
\\
In this case $(i_1, i_2, i_3)$ is a permutation of $(1,2,3)$.\\
	If $(i_1, i_2, i_3) = (1,2,3)$, then $P_{\alpha g_{1} g_{2} g_{3}} - P_{\beta g_{1} g_{2} g_{3}} \in\bar I_G$ by the definition of $\bar I_G$.\\

For $(i_1, i_2, i_3)=(1,3,2)$, we define 
\begin{align*}
Q_{\alpha g_1 g_3 g_2} &:= P_{\alpha g_1} x_{23} + x_3 P_{\alpha g_1 g_2} + x_2 P_{\alpha g_1 g_3} - P_{\alpha g_1} x_3 x_2 - P_{\alpha g_1 g_2 g_3},\\
Q_{\beta g_1 g_3 g_2} &:= P_{\beta g_1} x_{23} + x_3 P_{\beta g_1 g_2} + x_2 P_{\beta g_1 g_3} - P_{\beta g_1} x_3 x_2 - P_{\beta g_1 g_2 g_3}.
\end{align*}
Then we have $\Phi(Q_{\alpha g_1 g_3 g_2})=[\alpha g_1 g_3 g_2]$ and $\Phi(Q_{\beta g_1 g_3 g_2})=[\beta g_1 g_3 g_2]$ by Lemma \ref{lem:skeineq} (4). We have
\begin{align*}
Q_{\alpha g_1 g_3 g_2} - Q_{\beta g_1 g_3 g_2} &= ( P_{\alpha g_1} - P_{\beta g_1} ) x_{23} + x_3 (P_{\alpha g_1 g_2} - P_{\beta g_1 g_2} )\\
&\quad + x_2 ( P_{\alpha g_1 g_3} - P_{\beta g_1 g_3} )- ( P_{\alpha g_1} - P_{\beta g_1} ) x_3 x_2\\
&\quad - ( P_{\alpha g_1 g_2 g_3} - P_{\beta g_1 g_2 g_3}) \in\bar I_G,
\end{align*}
by the cases $1$-$3$ and  $(i_1, i_2, i_3)=(1,2,3)$.

We can prove the other $4$ cases similarly, using the following pairs of polynomials
$ Q_{\alpha g_{i_1} g_{1_2} g_{i_3}}$, $ Q_{\beta g_{i_1} g_{1_2} g_{i_3}}$.

For $(i_1,i_2,i_3)=(2,3,1)$, we define 
\begin{align*}
Q_{\alpha g_2 g_3 g_1} &:= P_{\alpha} x_{123} + x_{23} P_{\alpha g_1} + x_1 P_{\alpha g_2 g_3} - P_{\alpha} x_{23} x_1 - P_{\alpha g_1 g_2 g_3},\\
Q_{\beta g_2 g_3 g_1} &:= P_{\beta} x_{123} + x_{23} P_{\beta g_1} + x_1 P_{\beta g_2 g_3} - P_{\beta} x_{23} x_1 - P_{\beta g_1 g_2 g_3}.
\end{align*}

For $(i_1,i_2,i_3)=(2,1,3)$, we define
\begin{align*}
Q_{\alpha g_2 g_1 g_3} &:= P_{\alpha g_2} x_{13} + x_1 P_{\alpha g_2 g_3} + x_3 P_{\alpha g_2 g_1} - P_{\alpha g_2} x_1 x_3 - P_{\alpha g_2 g_3 g_1},\\
Q_{\beta g_2 g_1 g_3} &:= P_{\beta g_2} x_{13} + x_1 P_{\beta g_2 g_3} + x_3 P_{\beta g_2 g_1} - P_{\beta g_2} x_1 x_3 - P_{\beta g_2 g_3 g_1}.
\end{align*}

For  $(i_1,i_2,i_3)=(3,2,1)$,  we define 
\begin{align*}
Q_{\alpha g_3 g_2 g_1} &:= P_{\alpha} P_{g_3 g_2 g_1} + x_{23} P_{\alpha g_1} + x_1 P_{\alpha g_3 g_2} - P_{\alpha} x_{23} x_1 - P_{\alpha g_1 g_3 g_2},\\
Q_{\beta g_3 g_2 g_1} &:= P_{\beta} P_{g_3 g_2 g_1} + x_{23} P_{\beta g_1} + x_1 P_{\beta g_3 g_2} - P_{\beta} x_{23} x_1 - P_{\beta g_1 g_3 g_2}.
\end{align*}

For  $(i_1,i_2,i_3)=(3,1,2)$, we define 
\begin{align*}
Q_{\alpha g_3 g_1 g_2} &:= P_{\alpha g_3} x_{12} + x_1 P_{\alpha g_3 g_2} + x_2 P_{\alpha g_3 g_1} - P_{\alpha g_3} x_1 x_2 - P_{\alpha g_3 g_2 g_1},\\
Q_{\beta g_3 g_1 g_2} &:= P_{\beta g_3} x_{12} + x_1P_{\beta g_3 g_2} + x_2 P_{\beta g_3 g_1} - P_{\beta g_3} x_1 x_2 - P_{\beta g_3 g_2 g_1}.
\end{align*}

\item subcase 4-2. $m_1 = m_2 =1, m_3>1$\\
We use induction on $m_3$. Set
\begin{align*}
Q_{\alpha g_{i_1}g_{i_2}g_{i_3}^{m_3}} &:= P_{\alpha g_{i_1}g_{i_2}g_{i_3}^{m_3-1}}x_{i_3} - P_{\alpha g_{i_1}g_{i_2}g_{i_3}^{m_3-2}},\\
Q_{\beta g_{i_1}g_{i_2}g_{i_3}^{m_3}} &:= P_{\beta g_{i_1}g_{i_2}g_{i_3}^{m_3-1}}x_{i_3} - P_{\beta g_{i_1}g_{i_2}g_{i_3}^{m_3-2}},
\end{align*}
so that we have $\Phi(Q_{\alpha g_{i_1}g_{i_2}g_{i_3}^{m_3}})=[\alpha g_{i_1}g_{i_2}g_{i_3}^{m_3}]$ and $\Phi(Q_{\beta g_{i_1}g_{i_2}g_{i_3}^{m_3}})=[\beta g_{i_1}g_{i_2}g_{i_3}^{m_3}]$, which follows from the defining relation $gh=g\otimes h - gh^{-1}$ in the skein algebra. Then we have
\begin{align*}
Q_{\alpha g_{i_1}g_{i_2}g_{i_3}^{m_3}} - Q_{\beta g_{i_1}g_{i_2}g_{i_3}^{m_3}} &= (P_{\alpha g_{i_1}g_{i_2}g_{i_3}^{m_3-1}} - P_{\beta g_{i_1}g_{i_2}g_{i_3}^{m_3-1}})x_{i_3}\\
&\quad - (P_{\alpha g_{i_1}g_{i_2}g_{i_3}^{m_3-2}} - P_{\beta g_{i_1}g_{i_2}g_{i_3}^{m_3-2}})\in\bar I_G,
\end{align*}
where the assertion for $m_3=2$ follows from the case 3 and the subcase 4-1, that for $m_3=3$ follows from the subcase 4-1 and the case $m_3=2$, and that for $m_3\geq4$ follows from the induction on $m_3$.\\

\item subcase 4-3. $m_1 = m_2 =1, m_3<0$\\
We use induction on $|m_3|$. Set
\begin{align*}
Q_{\alpha g_{i_1}g_{i_2}g_{i_3}^{m_3}} &:= P_{\alpha g_{i_1}g_{i_2}g_{i_3}^{m_3+1}}x_{i_3} - P_{\alpha g_{i_1}g_{i_2}g_{i_3}^{m_3+2}},\\
Q_{\beta g_{i_1}g_{i_2}g_{i_3}^{m_3}} &:= P_{\beta g_{i_1}g_{i_2}g_{i_3}^{m_3+1}}x_{i_3} - P_{\beta g_{i_1}g_{i_2}g_{i_3}^{m_3+2}},
\end{align*}
and the proof is similar to the subcase 4-2.\\

\item subcase 4-4. $m_1 = 1, m_2>1, m_3\in\Bbb{Z}\setminus\{0\}$\\
We use induction on $m_2$. Set
\begin{align*}
Q_{\alpha g_{i_1}g_{i_2}^{m_2}g_{i_3}^{m_3}} &:= P_{\alpha g_{i_1}g_{i_2}^{m_2-1}g_{i_3}^{m_3}}x_{i_2} - P_{\alpha g_{i_1}g_{i_2}^{m_2-2}g_{i_3}^{m_3}},\\
Q_{\beta g_{i_1}g_{i_2}^{m_2}g_{i_3}^{m_3}} &:= P_{\beta g_{i_1}g_{i_2}^{m_2-1}g_{i_3}^{m_3}}x_{i_2} - P_{\beta g_{i_1}g_{i_2}^{m_2-2}g_{i_3}^{m_3}},
\end{align*}
so that we have $\Phi(Q_{\alpha g_{i_1}g_{i_2}^{m_2}g_{i_3}^{m_3}})=[\alpha g_{i_1}g_{i_2}^{m_2}g_{i_3}^{m_3}]$ and $\Phi(Q_{\beta g_{i_1}g_{i_2}^{m_2}g_{i_3}^{m_3}})=[\beta g_{i_1}g_{i_2}^{m_2}g_{i_3}^{m_3}]$, which follows from (3) in Lemma \ref{lem:skeineq}. Then we have
\begin{align*}
Q_{\alpha g_{i_1}g_{i_2}^{m_2}g_{i_3}^{m_3}} - Q_{\beta g_{i_1}g_{i_2}^{m_2}g_{i_3}^{m_3}} &= (P_{\alpha g_{i_1}g_{i_2}^{m_2-1}g_{i_3}^{m_3}} - P_{\beta g_{i_1}g_{i_2}^{m_2-1}g_{i_3}^{m_3}})x_{i_2}\\
&\quad - (P_{\alpha g_{i_1}g_{i_2}^{m_2-2}g_{i_3}^{m_3}} - P_{\beta g_{i_1}g_{i_2}^{m_2-2}g_{i_3}^{m_3}}) \in\bar I_G,
\end{align*}
where the assertion for $m_2=2$ follows from the case 3 and the subcases 4-1 through 4-3, that for $m_2=3$ follows from the subcases 4-1 through 4-3 and the case $m_2=2$, and that for $m_2\geq4$ follows from the induction on $m_2$.\\

\item subcase 4-5. $m_1 = 1, m_2<0, m_3\in\Bbb{Z}\setminus\{0\}$\\
We use induction on $|m_2|$. Set
\begin{align*}
Q_{\alpha g_{i_1}g_{i_2}^{m_2}g_{i_3}^{m_3}} &:= P_{\alpha g_{i_1}g_{i_2}^{m_2+1}g_{i_3}^{m_3}}x_{i_2} - P_{\alpha g_{i_1}g_{i_2}^{m_2+2}g_{i_3}^{m_3}},\\
Q_{\beta g_{i_1}g_{i_2}^{m_2}g_{i_3}^{m_3}} &:= P_{\beta g_{i_1}g_{i_2}^{m_2+1}g_{i_3}^{m_3}}x_{i_2} - P_{\beta g_{i_1}g_{i_2}^{m_2+2}g_{i_3}^{m_3}},
\end{align*}
and the proof is similar to the subcase 4-4.\\

\item subcase 4-6. $m_1>1, m_2\in\Bbb{Z}\setminus\{0\}, m_3\in\Bbb{Z}\setminus\{0\}$\\
We use induction on $m_1$. Set
\begin{align*}
Q_{\alpha g_{i_1}^{m_1}g_{i_2}^{m_2}g_{i_3}^{m_3}} &:= P_{\alpha g_{i_1}^{m_1-1}g_{i_2}^{m_2}g_{i_3}^{m_3}}x_{i_1} - P_{\alpha g_{i_1}^{m_1-2}g_{i_2}^{m_2}g_{i_3}^{m_3}},\\
Q_{\beta g_{i_1}^{m_1}g_{i_2}^{m_2}g_{i_3}^{m_3}} &:= P_{\beta g_{i_1}^{m_1-1}g_{i_2}^{m_2}g_{i_3}^{m_3}}x_{i_1} - P_{\beta g_{i_1}^{m_1-2}g_{i_2}^{m_2}g_{i_3}^{m_3}},
\end{align*}
so that we have $\Phi(Q_{\alpha g_{i_1}^{m_1}g_{i_2}^{m_2}g_{i_3}^{m_3}}) = [\alpha g_{i_1}^{m_1}g_{i_2}^{m_2}g_{i_3}^{m_3}]$ and $\Phi(Q_{\beta g_{i_1}^{m_1}g_{i_2}^{m_2}g_{i_3}^{m_3}}) = [\beta g_{i_1}^{m_1}g_{i_2}^{m_2}g_{i_3}^{m_3}]$, which follows from (3) in Lemma \ref{lem:skeineq}. Then we have
\begin{align*}
Q_{\alpha g_{i_1}^{m_1}g_{i_2}^{m_2}g_{i_3}^{m_3}} - Q_{\beta g_{i_1}^{m_1}g_{i_2}^{m_2}g_{i_3}^{m_3}} &= (P_{\alpha g_{i_1}^{m_1-1}g_{i_2}^{m_2}g_{i_3}^{m_3}} - P_{\beta g_{i_1}^{m_1-1}g_{i_2}^{m_2}g_{i_3}^{m_3}})x_{i_1}\\
&\quad - (P_{\alpha g_{i_1}^{m_1-2}g_{i_2}^{m_2}g_{i_3}^{m_3}} - P_{\beta g_{i_1}^{m_1-2}g_{i_2}^{m_2}g_{i_3}^{m_3}})\in\bar I_G,
\end{align*}
where the assertion for $m_1=2$ follows from the case 3 and subcases 4-1 through 4-5, that for $m_1=3$ follows from the subcases 4-1 through 4-5 and the case $m_1=2$, and that for $m_1\geq4$ follows from the induction on $m_1$.\\

\item subcase 4-7. $m_1<0, m_2\in\Bbb{Z}\setminus\{0\}, m_3\in\Bbb{Z}\setminus\{0\}$\\
We use induction on $|m_1|$. Set
\begin{align*}
Q_{\alpha g_{i_1}^{m_1}g_{i_2}^{m_2}g_{i_3}^{m_3}} &:= P_{\alpha g_{i_1}^{m_1+1}g_{i_2}^{m_2}g_{i_3}^{m_3}}x_{i_1} - P_{\alpha g_{i_1}^{m_1+2}g_{i_2}^{m_2}g_{i_3}^{m_3}},\\
Q_{\beta g_{i_1}^{m_1}g_{i_2}^{m_2}g_{i_3}^{m_3}} &:= P_{\beta g_{i_1}^{m_1+1}g_{i_2}^{m_2}g_{i_3}^{m_3}}x_{i_1} - P_{\beta g_{i_1}^{m_1+2}g_{i_2}^{m_2}g_{i_3}^{m_3}},
\end{align*}
and the proof is similar to the subcase 4-6.\\

\end{itemize}

\noindent\fbox{case 5. $r=3$ with $i_1= i_3$ or $r\geq4$}\\
Let  $g = g_{i_1}^{m_1}\cdots g_{i_r}^{m_r}$.
In this case there exist $i_{s_1} = i_{s_2}$ for some $1\leq s_1 < s_2 \leq r$. Let $X := g_{i_1}^{m_1} \ldots g_{i_{s_1}}^{m_{s_1}}, Y := g_{i_{s_1+1}}^{m_{s_1+1}} \ldots g_{i_{s_2}}^{m_{s_2}}, Z:=g_{i_{s_2+1}}^{m_{s_2+1}} \ldots g_{i_{r}}^{m_{r}}$ (i.e., $g = XYZ$). We assume that $Z=e$ when $s_2=r$.
Note that both $XZ$ and $XY^{-1}Z$ have the lengths  less than $XYZ$. 
For example for $r=3$,  we have $X=g_{i_1}^{m_1}$, $Y=g_{i_2}^{m_2}g_{i_1}^{m_3}$, $Z=e$, and we have  $XZ=X=g_{i_1}^{m_1}$ and $XY^{-1}Z=g_{i_1}^{m_1-m_3}g_{i_2}^{-m_2}.$ 
Set
\begin{align*}
Q_{\alpha XYZ} &:= P_{\alpha XZ} P_{Y} - P_{\alpha XY^{-1}Z},\\
Q_{\beta XYZ} &:= P_{\beta XZ} P_{Y} - P_{\beta XY^{-1}Z},
\end{align*}
so that  $\Phi(Q_{\alpha XYZ})=[\alpha XYZ]$ and $\Phi(Q_{\beta XYZ})=[\beta XYZ]$, which follows from Lemma \ref{lem:skeineq} (3). Then we have
\begin{align*}
Q_{\alpha XYZ} - Q_{\beta XYZ} = (P_{\alpha XZ} - P_{\beta XZ}) P_{Y} - (P_{\alpha XY^{-1}Z} - P_{\beta XY^{-1}Z}) \in\bar I_G,
\end{align*}
by the induction on $r$.  
\end{proof}

\section{Skein algebra of Borromean rings complement in $S^3$}\label{SS}

In this section we consider the skein algebra of the fundamental group $\pi_1M_B$ of the Borromean rings complement $M_B$. We give explicit generators of $I_{\pi_1M_B}$.

\subsection{Fundamental group of  Borromean rings complement}\label{SS_1}
Let $M_B$ be the Borromean rings complement in $S^3$. We will use the following presentation of $\pi_1M_B$.

\begin{lem}\label{pi1ofBorromean}
We have
\begin{equation*}
\pi_1 M_B = \langle\  g_1, g_2, g_3\ |\   \alpha = \beta, \ \gamma = \delta\ \rangle,
\end{equation*}
where $\alpha = g_3g_2^{-1}g_1g_2g_1^{-1}, \beta = g_2^{-1}g_1g_2 g_1^{-1}g_3, \gamma=g_2g_1^{-1}g_3g_1g_3^{-1}, \delta=g_1^{-1}g_3g_1g_3^{-1}g_2$.\\
\end{lem}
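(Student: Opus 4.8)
The plan is to compute a Wirtinger presentation of $\pi_1(S^3\setminus B)$ from a standard diagram of the Borromean rings and then simplify it by Tietze transformations down to three generators and two relators. First I would fix a planar diagram of the Borromean rings in which each of the three components passes over one neighbour and under the other, so that the diagram has six crossings and each arc is an overpass between two consecutive undercrossings; this gives six Wirtinger generators (one per arc) and six Wirtinger relations (one per crossing, of the form $x_k = w x_j w^{-1}$ with $w$ one of the overpassing generators). I would name the three ``meridian'' generators $g_1,g_2,g_3$ as the arcs of the three components that we wish to keep, and the remaining three generators as auxiliary arcs; the auxiliary generators are then eliminated using three of the six crossing relations, which expresses them as explicit conjugates $g_i^{\pm 1}$-words in $g_1,g_2,g_3$.

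The key steps, in order: (1) draw and label the diagram, recording for each crossing which arc goes over; (2) write down the six Wirtinger relations; (3) use the three relations at the crossings where an auxiliary arc terminates to solve for the auxiliary generators in terms of $g_1,g_2,g_3$; (4) substitute these expressions into the remaining three relations, obtaining three relations purely in $g_1,g_2,g_3$; (5) observe that one of these three is a consequence of the other two together with the relation coming from the fact that the diagram bounds (equivalently, that the product of meridians around the sphere is trivial, or simply that in a link group one Wirtinger relation is always redundant), and discard it; (6) massage the surviving two relations, via cyclic permutation and inversion of relators (which correspond to replacing $\alpha=\beta$ by a conjugate/inverse equation), into exactly the stated form $\alpha=\beta$ with $\alpha=g_3g_2^{-1}g_1g_2g_1^{-1}$, $\beta=g_2^{-1}g_1g_2g_1^{-1}g_3$, and $\gamma=\delta$ with $\gamma=g_2g_1^{-1}g_3g_1g_3^{-1}$, $\delta=g_1^{-1}g_3g_1g_3^{-1}g_2$. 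Note that the two relations exhibit the cyclic $\mathbb{Z}/3$ symmetry of the Borromean rings together with the symmetry exchanging over/under, which is a useful internal consistency check: $\gamma=\delta$ should be obtainable from $\alpha=\beta$ by a suitable relabelling of the generators.

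The main obstacle I expect is bookkeeping rather than conceptual: one must be careful about crossing signs and about the orientation conventions that determine whether a Wirtinger relation reads $x_k=wx_jw^{-1}$ or $x_k=w^{-1}x_jw$, since getting these backwards produces relators that are inverses or conjugates of the intended ones and then the final cosmetic matching in step (6) fails. A secondary subtlety is step (5): rather than invoking the general ``one relation is redundant'' fact abstractly, it is cleaner to exhibit explicitly that the third relation equals a product of conjugates of the first two, so that the presentation with two relators is genuinely justified. Once the right diagram and orientation conventions are pinned down, steps (3)--(6) are a routine, if slightly lengthy, free-group calculation, and the symmetry of the answer makes it self-checking.
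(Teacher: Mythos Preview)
Your plan is correct and matches the paper's proof essentially step for step: the paper labels six meridians $g_1,\dots,g_6$, writes the six Wirtinger relations $\mathbf r_1,\dots,\mathbf r_6$, uses $\mathbf r_1,\mathbf r_3,\mathbf r_6$ to eliminate $g_4,g_5,g_6$, substitutes into the remaining three relations, and then carries out exactly your preferred version of step~(5) by exhibiting explicitly that one of the three resulting relations follows from the other two. The only thing to add is that the final cosmetic rewriting in your step~(6) is immediate once you have the explicit relations.
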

\begin{figure}
\centering
\begin{picture}(300,190)
\put(30,20){\includegraphics[width=5.8cm,clip]{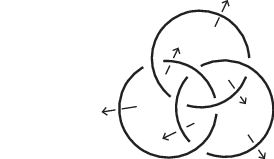}}
\put(113,80){$1$}
\put(145,78){$2$}
\put(183,77){$3$}
\put(127,47){$4$}
\put(163,43){$5$}
\put(147,13){$6$}
\put(169,110){$g_1$}
\put(88,40){$g_2$}
\put(180,15){$g_3$}
\put(178,49){$g_4$}
\put(130,90){$g_5$}
\put(117,35){$g_6$}
\end{picture}
\caption{Wirtinger presentation of the Borromen rings}\label{fig:borromean}
\end{figure} 
\begin{proof}
Let $g_1, g_2, g_3, g_4, g_5, g_6$ be the meridians of the link as shown in Figure \ref{fig:borromean}.
By the Wirtinger presentation of the Borromean rings, we have
\begin{equation*}
\pi_1M_B = \langle g_1, g_2, g_3, g_4, g_5, g_6\ |\ \mathbf{r}_1, \mathbf{r}_2,\mathbf{r}_3, \mathbf{r}_4, \mathbf{r}_5, \mathbf{r}_6\rangle
\end{equation*}
where $\mathbf{r}_i$, $i=1,\ldots,6$, is the relation obtained from the $i$-th crossing as below;
\begin{align*}
\mathbf{r}_1 : g_2g_1^{-1}g_5^{-1}g_1 = e,\\
\mathbf{r}_2 : g_6g_5g_3^{-1}g_5^{-1} = e, \\
\mathbf{r}_3 : g_1g_3^{-1}g_4^{-1}g_3 = e, \\
\mathbf{r}_4 : g_4g_6g_1^{-1}g_6^{-1} = e, \\
\mathbf{r}_5 : g_5g_4g_2^{-1}g_4^{-1} = e, \\
\mathbf{r}_6 : g_3g_2^{-1}g_6^{-1}g_2 = e.  
\end{align*}

By the relations $\mathbf{r}_1, \mathbf{r}_3, \mathbf{r}_6$, we have
\begin{align*}
g_5 = g_1g_2g_1^{-1},\quad
g_4 = g_3g_1g_3^{-1},\quad
g_6 = g_2g_3g_2^{-1},
\end{align*}
respectively. Thus $\pi_1M_B$ is presented by generators $g_1, g_2, g_3$ and relations 

\begin{align*}
\mathbf{r'}_2& : (g_2g_3g_2^{-1})(g_1g_2g_1^{-1})g_3^{-1}(g_1g_2^{-1}g_1^{-1}) = e,\\
\mathbf{r'}_4& : (g_3g_1g_3^{-1})(g_2g_3g_2^{-1})g_1^{-1}(g_2g_3^{-1}g_2^{-1}) = e,\\
\mathbf{r'}_5& : (g_1g_2g_1^{-1})(g_3g_1g_3^{-1})g_2^{-1}(g_3g_1^{-1}g_3^{-1}) = e.
\end{align*}

In fact the relation $\mathbf{r'}_4$ is derived from $\mathbf{r'}_2, \mathbf{r'}_5$ as follows. By  $\mathbf{r'}_2$ and $\mathbf{r'}_5$ we have 
\begin{align*}
 (g_2g_3g_2^{-1})(g_1g_2g_1^{-1})g_3^{-1}&=(g_1g_2g_1^{-1})\\
 &= (g_3g_1g_3^{-1})g_2(g_3g_1^{-1}g_3^{-1}),
\end{align*}
thus we have
$g_3g_1g_3^{-1}g_2g_3 = g_2g_3g_2^{-1}g_1g_2$,
which is equivalent to $\mathbf{r'}_4$.
Note that $\mathbf{r'}_2$ is equivalent to $\alpha = \beta$, and $\mathbf{r'}_5 $ is equivalent to $\gamma = \delta$.

Thus we have the assertion.

\end{proof}

\subsection{Proof of Theorem \ref{Theorem:3}}\label{SS_2}
In this subsection we prove Theorem \ref{Theorem:3}.
Recall the polynomials $Q_{\alpha g, \beta g}, Q_{\gamma g, \delta g}$ in  Theorem  \ref{Theorem:3}. 
We define $Q_{\alpha, \beta}, Q_{\gamma, \delta}, Q_{\gamma g_2, \delta g_2}, Q_{\alpha g_3, \beta g_3}$, which are not on the list in Theorem \ref{Theorem:3}, as  copies of the zero polynomial, and consider the ideal
$$\hat I_{\pi_1M_B} := \langle K, Q_{\alpha g, \beta g}, Q_{\gamma g, \delta g}\ |\ g=g_1^{i_1}g_2^{i_2}g_3^{i_3}, 0\leq i_1,i_2,i_3\leq1\rangle \subset \Bbb{C}[x_1,\ldots,x_{123}].$$
Recall that  we have
$$I_{\pi_1M_B} = \bar I_{\pi_1M_B} = \langle K, P_{\alpha g} - P_{\beta g}, P_{\gamma g} - P_{\delta g}\ |\ g=g_1^{i_1}g_2^{i_2}g_3^{i_3}, 0\leq i_1,i_2,i_3\leq1\rangle$$
by Theorem \ref{Theorem:2}, and thus it suffices to show that $\bar I_{\pi_1M_B} = \hat I_{\pi_1M_B}$. 

We reduce Theorem \ref{Theorem:3} to the following lemma.
\begin{lem}\label{dev}
$\Phi(Q_{\alpha g, \beta g}) = [\alpha g] - [\beta g]$ and $\Phi(Q_{\gamma g, \delta g}) = [\gamma g] - [\delta g]$ for each $g\in\{g_1^{i_1}g_2^{i_2}g_3^{i_3}\ |\ 0\leq i_1,i_2,i_3\leq1\}$. 
\end{lem}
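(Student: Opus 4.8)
The lemma amounts to sixteen identities in $S(F_3;\Bbb{C})$, one for each of the eight words $g=g_1^{i_1}g_2^{i_2}g_3^{i_3}$ with $0\leq i_1,i_2,i_3\leq1$ and each of the two relator pairs. The plan is to use the isomorphism $\hat\Phi$ of the introduction to identify $S(F_3;\Bbb{C})$ with $\Bbb{C}[x_1,\ldots,x_{123}]/\langle K\rangle$, so that $[g_i]=x_i$, $[g_ig_j]=x_{ij}$, $[g_1g_2g_3]=x_{123}$ and the skein product is ordinary multiplication; since $\Phi$ is surjective, it then suffices to compute each left-hand side $[\alpha g]-[\beta g]$ (resp.\ $[\gamma g]-[\delta g]$) as a polynomial in these seven generators by means of Lemma \ref{lem:skeineq}, and to check that the polynomial obtained coincides, modulo $\langle K\rangle$, with the corresponding $Q_{\alpha g,\beta g}$ (resp.\ $Q_{\gamma g,\delta g}$) of Theorem \ref{Theorem:3}, and with $0$ in the four cases where $Q$ was set to be $0$.

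First I would dispose of those four degenerate cases. Writing $\alpha=g_3w$ and $\beta=wg_3$ with $w:=g_2^{-1}g_1g_2g_1^{-1}$, and $\gamma=g_2v$ and $\delta=vg_2$ with $v:=g_1^{-1}g_3g_1g_3^{-1}$, identity (2) of Lemma \ref{lem:skeineq} gives at once $[\alpha]=[g_3w]=[wg_3]=[\beta]$, $[\alpha g_3]=[g_3wg_3]=[wg_3^2]=[\beta g_3]$, $[\gamma]=[g_2v]=[vg_2]=[\delta]$ and $[\gamma g_2]=[g_2vg_2]=[vg_2^2]=[\delta g_2]$, which is exactly the claim for $Q_{\alpha,\beta}=Q_{\gamma,\delta}=Q_{\alpha g_3,\beta g_3}=Q_{\gamma g_2,\delta g_2}=0$.

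For the remaining twelve cases I would run the standard reduction behind Lemma \ref{lem:skeineq}: an arbitrary trace $[h]$, $h\in F_3$, is turned into a polynomial in the seven generators by repeatedly applying the defining relation $[ab]=[a][b]-[ab^{-1}]$ to remove inverse letters, identities (3) and (4) of Lemma \ref{lem:skeineq} to re-sort the surviving positive subwords into the monomials $x_i,x_{ij},x_{123}$, and the invariances $[ab]=[ba]$ and $[hgh^{-1}]=[g]$ to exploit cancellations. To keep the bookkeeping manageable I would first tabulate once and for all the building blocks $[w]=x_1^2+x_2^2+x_{12}^2-x_1x_2x_{12}-2$ (the well-known trace of a commutator), the analogous $[v]$, and the short traces $[wg_i]$, $[wg_ig_j]$, $[wg_1g_2g_3]$ together with their $v$-counterparts; for instance $[wg_1]=[g_2^{-1}g_1g_2]=x_1$ by conjugation invariance. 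Then, for each $g$, I would expand $[\alpha g]=[g_3wg]$ and $[\beta g]=[wg_3g]$ in this way and subtract: for example $[\alpha g_1]=[g_3g_2^{-1}g_1g_2]=x_2x_{123}-x_{12}x_{23}+x_1x_3-x_{13}$, and $[\alpha g_1]-[\beta g_1]$ collapses to the listed polynomial $Q_{\alpha g_1,\beta g_1}$. The number of genuinely independent computations is roughly halved by the symmetry recorded after Theorem \ref{Theorem:3}: the automorphism of $S(F_3;\Bbb{C})$ induced by $g_2\leftrightarrow g_3$ carries the pair $(\alpha,\beta)$ to a pair whose trace difference is, up to sign, that of $(\gamma,\delta)$, which accounts for the pairings of $Q_{\gamma g_1,\delta g_1}$ with $Q_{\alpha g_1,\beta g_1}$ and of $Q_{\gamma g_3,\delta g_3}$ with $Q_{\alpha g_2,\beta g_2}$, and for the linked triple $Q_{\gamma g_1g_2}$, $Q_{\alpha g_2g_3}$, $Q_{\gamma g_2g_3}$.

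The main obstacle is simply the volume of this last step: each $[\alpha g]$ is the trace of a word of length up to eight in $g_1^{\pm1},g_2^{\pm1},g_3^{\pm1}$, so its rewriting branches considerably before collapsing to a polynomial of degree $\leq5$ in the seven variables, and the intermediate expressions must be kept organized. I would arrange the reduction so that every intermediate trace is one of a small fixed stock of shorter, already-reduced traces, and would cross-check each resulting polynomial by evaluating both sides at explicit triples $\rho(g_1),\rho(g_2),\rho(g_3)\in SL_2(\Bbb{C})$, using that $[\,\cdot\,]$ specializes to the ordinary matrix trace under any representation $\rho\co F_3\to SL_2(\Bbb{C})$.
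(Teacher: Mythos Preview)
Your approach is correct and is essentially the same as the paper's: both dispose of the four trivial cases by cyclic invariance and then compute each remaining $[\alpha g]-[\beta g]$ and $[\gamma g]-[\delta g]$ directly from the skein identities of Lemma~\ref{lem:skeineq} (the paper also records the auxiliary formulae of Lemma~\ref{cyc}), expanding until only the seven generators remain and matching with the listed $Q$'s. One caveat on your proposed shortcut: the automorphism $g_2\leftrightarrow g_3$ of $S(F_3;\Bbb{C})$ sends $x_{123}=[g_1g_2g_3]$ to $[g_1g_3g_2]=x_1x_{23}+x_2x_{13}+x_3x_{12}-x_1x_2x_3-x_{123}$, so the polynomial symmetry recorded after Theorem~\ref{Theorem:3} (which \emph{fixes} $x_{123}$) is not literally that automorphism and cannot be invoked to halve the work without further argument; the paper simply carries out all twelve nontrivial computations by hand.
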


\begin{proof}[Proof of Theorem \ref{Theorem:3}  assuming Lemma \ref{dev}]
By Lemma \ref{dev} we have $\bar I_{\pi_1M_B} = \hat I_{\pi_1M_B}$ because
\begin{align*}
(P_{\alpha g} - P_{\beta g}) - Q_{\alpha g, \beta g} &\in\ker\Phi=\langle K \rangle \subset \hat I_{\pi_1M_B}\cap\bar I_{\pi_1M_B},\\
(P_{\gamma g} - P_{\delta g}) - Q_{\gamma g, \delta g} &\in\ker\Phi=\langle K \rangle \subset \hat I_{\pi_1M_B}\cap\bar I_{\pi_1M_B}.
\end{align*}
This completes the proof.
\end{proof}

In what follows, we prove Lemma \ref{dev}. We will use the following formulae.

\begin{lem}\label{cyc}
We have
\begin{align*}
\tag{a}[g_i^{2}]&=[g_i]^2-2, \quad \text{for } i=1,2,3,\\
\tag{b}[g_ig_j^{-1}]&=[g_i]\otimes[g_j]-[g_ig_j],\quad \text{for } i,j=1,2,3, \ i\not =j,\\
\tag{c1}[g_1^{-1}g_3g_2] 
&= - [g_2]\otimes[g_1g_3] - [g_3]\otimes[g_1g_2] + [g_1]\otimes[g_2]\otimes[g_3] + [g_1g_2g_3],\\
\tag{c2}[g_2^{-1}g_1g_3] 
&= -[g_3]\otimes[g_1g_2] - [g_1]\otimes[g_2g_3] + [g_1]\otimes[g_2]\otimes[g_3] + [g_1g_2g_3],\\
\tag{c3}[g_3^{-1}g_2g_1] 
&= -[g_1]\otimes[g_2g_3] - [g_2]\otimes[g_1g_3] + [g_1]\otimes[g_2]\otimes[g_3] + [g_1g_2g_3].
\end{align*}
\end{lem}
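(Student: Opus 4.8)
The plan is to prove Lemma \ref{cyc} by straightforward application of the identities collected in Lemma \ref{lem:skeineq}, with almost no creativity required; the only mild subtlety is bookkeeping the cyclic-permutation and inversion symmetries correctly. First I would dispatch (a): taking $a=b=g_i$ in identity (3) of Lemma \ref{lem:skeineq} with the middle element $e$, or more directly using the defining relation $[g_ig_i]=[g_i]\otimes[g_i]-[g_ig_i^{-1}]=[g_i]^2-[e]=[g_i]^2-2$, where $[e]=2$ is part of the definition of the skein algebra. Next, (b) is literally the defining relation $[gh]=[g]\otimes[h]-[gh^{-1}]$ read backwards: $[g_ig_j^{-1}]=[g_i]\otimes[g_j]-[g_ig_j]$, using also $[g_j^{-1}]=[g_j]$ from Lemma \ref{lem:skeineq}(1).

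For (c1)--(c3), the idea is to apply identity (4) of Lemma \ref{lem:skeineq}, namely
\[
[abc]=[a]\otimes[bc]+[b]\otimes[ac]+[c]\otimes[ab]-[a]\otimes[b]\otimes[c]-[acb],
\]
together with the inversion and cyclic invariance $[g]=[g^{-1}]$ and $[gh]=[hg]$. Concretely, for (c2) I would set $a=g_2^{-1}$, $b=g_1$, $c=g_3$. Then $[a]=[g_2^{-1}]=[g_2]$, $[bc]=[g_1g_3]$, $[ac]=[g_2^{-1}g_3]=[g_2]\otimes[g_3]-[g_2g_3]$ by (b), $[ab]=[g_2^{-1}g_1]=[g_2]\otimes[g_1]-[g_2g_1]=[g_1]\otimes[g_2]-[g_1g_2]$, and $[acb]=[g_2^{-1}g_3g_1]=[g_1g_2^{-1}g_3]$ by cyclicity, which equals $[g_1]\otimes[g_2g_3]-[g_1g_3^{-1}g_2^{-1}]=[g_1]\otimes[g_2g_3]-[g_2g_1g_3]$ — hmm, this reintroduces a length-three word, so instead I would expand $[g_2^{-1}g_3g_1]$ once more via (4) or simply observe $[acb]=[a]\otimes[cb]-\dots$; the cleaner route is to notice that $acb$ and a cyclic rotation give $[g_1g_2^{-1}g_3]$, apply (3) of Lemma \ref{lem:skeineq} with the single letter $g_2$ in the middle: $[g_2]\otimes[g_1g_3]=[g_1g_2g_3]+[g_1g_2^{-1}g_3]$, hence $[g_1g_2^{-1}g_3]=[g_2]\otimes[g_1g_3]-[g_1g_2g_3]$. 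Substituting everything into (4) and simplifying yields exactly the claimed expression for (c2). Identities (c1) and (c3) follow by the same computation with the roles of the indices permuted ($g_1\leftrightarrow$ middle letter), or simply by applying the evident index-permutation symmetry of the skein relations, so I would write one case in full and remark that the other two are identical after relabeling.

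The main obstacle — such as it is — is purely organizational: making sure each length-three bracket that appears on the right-hand side of an application of (4) is itself reduced, via identity (3) with a single middle letter, to the standard generators $[g_i], [g_ig_j], [g_1g_2g_3]$ and that all the $\otimes$-products are grouped consistently; there is no conceptual difficulty, since $S(G;R)$ is commutative and every reduction terminates because each step shortens word length or removes an inverse. I would therefore present (a) and (b) in one line each, give the derivation of (c2) in a short displayed computation, and state that (c1) and (c3) are obtained by the symmetry of Lemma \ref{lem:skeineq} under permuting $g_1,g_2,g_3$.
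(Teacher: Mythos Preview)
Your proposal is correct and uses essentially the same ingredients as the paper: the defining relation $[gh]=[g]\otimes[h]-[gh^{-1}]$ together with $[e]=2$ for (a) and (b), and identity (4) of Lemma \ref{lem:skeineq} for (c1)--(c3), with the remaining cases obtained by relabelling. The paper's order of operations is slightly tidier than yours: instead of plugging $a=g_i^{-1}$ directly into (4) (which, as you noticed, produces an auxiliary three-letter word $[acb]$ that must then be reduced via (3)), it first strips off the inverse via $[g_1^{-1}g_3g_2]=[g_1]\otimes[g_3g_2]-[g_1g_3g_2]$ and only then applies (4) to the inverse-free word $[g_1g_3g_2]$, so that the $[g_1]\otimes[g_3g_2]$ terms cancel immediately and no further reduction is needed.
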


\begin{proof}
(a1) and (a2) follow from the defining relations $[gh]=[g]\otimes [h]  - [gh^{-1}]$ $(=[g]\otimes [h]  - [g^{-1}h])$ and $[e]=2$ of the skein algebra. 
For (b1), using Lemma \ref{lem:skeineq} (4), we have
\begin{align*}
[g_1^{-1}g_3g_2] 
&= [g_1^{-1}]\otimes[g_3g_2] - [g_1g_3g_2]\\
&= [g_1]\otimes[g_3g_2] - ([g_1]\otimes[g_3g_2] + [g_3]\otimes[g_1g_2] + [g_2]\otimes[g_1g_3]\\
&\quad - [g_1]\otimes[g_3]\otimes[g_2] - [g_1g_2g_3])\\
&= - [g_2]\otimes[g_1g_3] - [g_3]\otimes[g_1g_2] + [g_1]\otimes[g_2]\otimes[g_3] + [g_1g_2g_3].
\end{align*}
We can prove (b2) and (b3) similarly.
\end{proof}

\begin{proof}[Proof of Lemma \ref{dev}]
We use the defining relation $[e]=2$ and  $[gh]=[g]\otimes [h]  - [gh^{-1}]$ $(=[g]\otimes [h]  - [g^{-1}h])$ of the skein algebra and Lemma \ref{lem:skeineq} freely.
\\

\noindent\fbox{$Q_{\alpha,\beta}$ and $Q_{\gamma, \delta}$}\\

We have $\Phi(Q_{\alpha,\beta}) = 0 = [\alpha] - [\beta]$ and $\Phi(Q_{\gamma, \delta}) = 0 = [\gamma] - [\delta]$.\\

\noindent\fbox{$Q_{\alpha g_1,\beta g_1}$ and $Q_{\gamma g_1, \delta g_1}$}\\

We have
\begin{align*}
[\alpha g_1] - [\beta g_1] &= [g_3{g_2}^{-1} g_1g_2g_1^{-1}g_1] - [{g_2}^{-1}g_1g_2{g_1}^{-1}g_3g_1]\\&= [g_3\cdot{g_2}^{-1}\cdot (g_1g_2)] - [({g_2}^{-1}g_1)\cdot(g_2{g_1}^{-1}g_3g_1)]\\
&= \Big([g_3]\otimes[{g_2}^{-1}g_1g_2] + [{g_2}^{-1}]\otimes[g_3g_1g_2] + [g_1g_2]\otimes[g_3\cdot {g_2}^{-1}]\\
&\quad - [g_3]\otimes[g_2^{-1}]\otimes[g_1g_2] - [g_3\cdot (g_1g_2)\cdot{g_2}^{-1}] \Big)\\
&\quad - \Big([{g_2}^{-1}\cdot g_1]\otimes[g_2){g_1}^{-1}(g_3g_1] - [({g_1}^{-1}{g_2})\cdot (g_2{g_1}^{-1}g_3g_1)]\Big)\\
&= \Big([g_3]\otimes[g_1] + [{g_2}]\otimes[g_1g_2g_3] + [g_1g_2]\otimes\left([g_3]\otimes[{g_2}] - [g_3g_2]\right)\\
&\quad - [g_3]\otimes[g_2]\otimes[g_1g_2] - [g_3g_1]\Big)\\
&\quad - \Big(\left([g_2]\otimes[g_1] -[g_2g_1]\right)\otimes \left([g_1^{-1}]\otimes[g_3g_1g_2] - [g_1g_3g_1g_2]\right) - [g_2^2g_1^{-1}g_3]\Big)\\
&= [g_3]\otimes[g_1] + [{g_2}]\otimes[g_1g_2g_3] - [g_1g_2]\otimes[g_2g_3] - [g_1g_3]\\
&\quad - \left([g_1]\otimes[g_2] -[g_1g_2]\right)\otimes \left([g_1]\otimes[g_1g_2g_3] - [g_1g_3g_1g_2]\right) + [g_2^2g_1^{-1}g_3].
\end{align*}
Here we have
\begin{align*}
[(g_1g_3) \cdot (g_1g_2)]
&= [g_1g_3]\otimes[g_1g_2] - [{g_3}^{-1}{g_1}^{-1}g_1g_2] \\
&= [g_1g_3]\otimes[g_1g_2] - [g_3^{-1} g_2],\\
[g_2^2{g_1}^{-1}g_3] &= [g_2\cdot (g_2{g_1}^{-1}g_3)]
\\
&=[g_2]\otimes[g_2g_1^{-1}g_3] - [g_1^{-1}g_3].\\
\end{align*}
Using the above identities and Lemma \ref{cyc} (b), (c1), we have
\begin{align*}
[\alpha g_1] - [\beta g_1] &= -2[g_1g_2]\otimes[g_2g_3] + 2[g_2]\otimes[g_1g_2g_3] - [g_1]^{\otimes2}\otimes[g_2]\otimes[g_1g_2g_3]\\
&\quad + [g_1]\otimes[g_2]\otimes[g_1g_2]\otimes[g_1g_3] + [g_1]\otimes[g_2]\otimes[g_2g_3] + [g_1]\otimes[g_1g_2]\otimes[g_1g_2g_3]\\
&\quad - [g_1g_2]^{\otimes2}\otimes[g_1g_3] - [g_2]^{\otimes2}\otimes[g_1g_3]
\\
&=\Phi(Q_{\alpha g_1, \beta g_1}).
\end{align*}

On the other hand, we have
\begin{align*}
[\gamma g_1] - [\delta g_1] &= [(g_2g_1^{-1}g_3g_1)\cdot (g_3^{-1}g_1)] - [g_1^{-1}g_3g_1g_3^{-1}g_2g_1]\\
&= \left([g_2g_1^{-1}g_3g_1]\otimes[g_3^{-1}g_1] - [g_2g_1^{-1}g_3^2]\right) - [g_3g_1g_3^{-1}g_2]\\
&= \left([g_2g_1^{-1}g_3g_1]\otimes[g_3^{-1}g_1] - [g_3]\otimes[g_2g_1^{-1}g_3] + [g_2g_1^{-1}]\right) - [g_3g_1g_3^{-1}g_2].
\end{align*}
Here we have
\begin{align*}
[g_2\cdot g_1^{-1}\cdot(g_3g_1)] &= [g_2]\otimes[g_3] + [g_1^{-1}]\otimes[g_2g_3g_1] + [g_3g_1]\otimes[g_2g_1^{-1}] - [g_2]\otimes[g_1^{-1}]\otimes[g_3g_1] - [g_2g_3],\\
[(g_3g_1) \cdot g_3^{-1}\cdot g_2] &= [g_3g_1]\otimes[g_3^{-1}g_2] + [g_3^{-1}]\otimes[g_3g_1g_2] + [g_2]\otimes[g_1] - [g_3g_1]\otimes[g_3^{-1}]\otimes[g_2] - [g_1g_2].
\end{align*}
Using the above identities  and Lemma \ref{cyc} (b), (c1), by a straight calculation  we have
\begin{align*}
[\gamma g_1] - [\delta g_1] &= 2[g_1g_3]\otimes[g_2g_3]- 2[g_3]\otimes[g_1g_2g_3] + [g_1]^{\otimes2}\otimes[g_3]\otimes[g_1g_2g_3]
\\ 
&\quad- [g_1]\otimes[g_3]\otimes[g_1g_2]\otimes[g_1g_3]
-[g_1]\otimes[g_1g_3]\otimes[g_1g_2g_3] 
- [g_1]\otimes[g_3]\otimes[g_2g_3]\\
&\quad + [g_1g_2]\otimes[g_1g_3]^{\otimes2} + 
 [g_1g_2]\otimes[g_3]^{\otimes2}
\\
&=\Phi(Q_{\gamma g_1,\delta g_1}).
\end{align*}

\noindent\fbox{$Q_{\alpha g_2, \beta g_2}$ and $Q_{\gamma g_2, \delta g_2}$}\\

We have
\begin{align*}
[\alpha g_2] - [\beta g_2] &= [(g_3g_2^{-1}g_1g_2)\cdot (g_1^{-1}g_2)] - [g_2^{-1}g_1g_2g_1^{-1}g_3g_2]\\
&= \left([g_3g_2^{-1}g_1g_2]\otimes[g_1^{-1}g_2] - [g_3g_2^{-1}g_1^2]\right) - [g_1g_2g_1^{-1}g_3].
\end{align*}
Here we have
\begin{align*}
[g_3\cdot g_2^{-1}\cdot (g_1g_2)]&= [g_3]\otimes[g_1]+[g_2^{-1}]\otimes[g_3g_1g_2] + [g_1g_2]\otimes[g_3g_2^{-1}] - [g_3]\otimes[g_2^{-1}]\otimes[g_1g_2] - [g_3g_1],\\
[g_3g_2^{-1}g_1^2] 
&= [(g_3g_2^{-1}g_1)\cdot g_1]\\ &=[g_3g_2^{-1}g_1]\otimes[g_1] - [g_3g_2^{-1}]\\
[(g_1g_2)\cdot g_1^{-1}\cdot g_3] &= [g_1g_2]\otimes[g_1^{-1}g_3] + [g_1^{-1}]\otimes[g_1g_2g_3] + [g_3]\otimes[g_2] - [g_1g_2]\otimes[g_1^{-1}]\otimes[g_3] - [g_2g_3].
\end{align*}
Using the above identities and Lemma \ref{cyc} (b), (c2), by a straight calculation we have
\begin{align*}
[\alpha g_2] - [\beta g_2] &= - [g_1]\otimes[g_2]\otimes[g_1g_2]\otimes[g_2g_3] + [g_1g_2]^{\otimes2}\otimes[g_2g_3] + [g_1]^{\otimes2}\otimes[g_2g_3]\\
&\quad + [g_1]\otimes[g_2]^{\otimes2}\otimes[g_1g_2g_3] - [g_2]\otimes[g_1g_2]\otimes[g_1g_2g_3] -2[g_1]\otimes[g_1g_2g_3]\\
&\quad - [g_1]\otimes[g_2]\otimes[g_1g_3] + 2[g_1g_2]\otimes[g_1g_3]
\\
&=\Phi(Q_{\alpha g_2, \beta g_2}).
\end{align*}

On the other hand, we have
$$[\gamma g_2] - [\delta g_2] = [g_2g_1^{-1}g_3g_1g_3^{-1}g_2] - [g_1^{-1}g_3g_1g_3^{-1}g_2g_2]= 0=\Phi(Q_{\gamma g_2, \delta g_2}).$$

\noindent\fbox{$Q_{\alpha g_3, \beta g_3}$ and $Q_{\gamma g_3, \delta g_3}$}\\

We have
$$[\alpha g_3] - [\beta g_3] =  [g_3g_2^{-1}g_1g_2g_1^{-1}g_3] - [g_2^{-1}g_1g_2g_1^{-1}g_3g_3] = 0=\Phi(Q_{\alpha g_3, \beta g_3}).$$

On the other hand, we have
\begin{align*}
[\gamma g_3] - [\delta g_3] &= [g_2g_1^{-1}g_3g_1g_3^{-1}g_3] - [(g_1^{-1}g_3)\cdot (g_1g_3^{-1}g_2g_3)]\\
&= [g_2g_1^{-1}g_3g_1] - \left( [g_1^{-1}g_3]\otimes[g_1g_3^{-1}g_2g_3] - [g_1\cdot (g_1g_3^{-1}g_2)]\right)\\
&= [g_2g_1^{-1}g_3g_1] - \left([g_1^{-1}g_3]\otimes[g_1g_3^{-1}g_2g_3] - [g_1]\otimes[g_1g_3^{-1}g_2] + [g_3^{-1}g_2]\right).
\end{align*}
Here we have
\begin{align*}
[g_2\cdot g_1^{-1}\cdot (g_3g_1)] &= [g_2]\otimes[g_3] + [g_1^{-1}]\otimes[g_2g_3g_1] + [g_3g_1]\otimes[g_2g_1^{-1}] - [g_2]\otimes[g_1^{-1}]\otimes[g_3g_1] - [g_2g_3],\\
[g_1g_3^{-1}g_2g_3] &= [(g_3g_1)\cdot g_3^{-1}\cdot g_2]\\
&= [g_3g_1]\otimes[g_3^{-1}g_2] + [g_3^{-1}]\otimes[g_3g_1g_2] + [g_2]\otimes[g_1] - [g_3g_1]\otimes[g_3^{-1}]\otimes[g_2] - [g_1g_2].
\end{align*} 
Using the above identities and Lemma \ref{cyc} (b), (c3), by a straight calculation we have
\begin{align*}
[\gamma g_3] - [\delta g_3] &= [g_1]\otimes[g_3]\otimes[g_1g_3]\otimes[g_2g_3] - [g_1g_3]^{\otimes2}\otimes[g_2g_3] - [g_1]^{\otimes2}\otimes[g_2g_3]\\
&\quad -  [g_1]\otimes[g_3]^{\otimes2}\otimes[g_1g_2g_3] + [g_3]\otimes[g_1g_3]\otimes[g_1g_2g_3]\\
&\quad + 2[g_1]\otimes[g_1g_2g_3] + [g_1]\otimes[g_3]\otimes[g_1g_2] - 2[g_1g_2]\otimes[g_1g_3]
\\
&=\Phi(Q_{\gamma g_3, \delta g_3}).
\end{align*}

\noindent\fbox{$Q_{\alpha g_1g_2, \beta g_1g_2}$ and $Q_{\gamma g_1g_2, \delta g_1g_2}$}\\

We have
\begin{align*}
[\alpha g_1g_2] - [\beta g_1g_2] &= [g_3g_2^{-1}g_1g_2g_1^{-1}g_1g_2] - [g_2^{-1}g_1g_2g_1^{-1}g_3g_1g_2]\\
&= [(g_3g_2^{-1}g_1g_2)\cdot g_2] - [(g_1g_2g_1^{-1}g_3)\cdot g_1]\\
&= \left([g_3g_2^{-1}g_1g_2]\otimes[g_2] - [g_3g_2^{-1}g_1]\right) - \left([g_1g_2g_1^{-1}g_3]\otimes[g_1] - [g_2g_1^{-1}g_3]\right).
\end{align*}
Here we have
\begin{align*}
[g_3\cdot g_2^{-1}\cdot (g_1g_2)] &= [g_3]\otimes[g_1] + [g_2^{-1}]\otimes[g_3g_1g_2] + [g_1g_2]\otimes[g_3g_2^{-1}] - [g_3]\otimes[g_2^{-1}]\otimes[g_1g_2] - [g_3g_1],\\
[(g_1g_2)\cdot g_1^{-1}\cdot g_3] &= [g_1g_2]\otimes[g_1^{-1}g_3] + [g_1^{-1}]\otimes[g_1g_2g_3] + [g_3]\otimes[g_2] - [g_1g_2]\otimes[g_1^{-1}]\otimes[g_3] -[g_2g_3].
\end{align*}
Using the above identities and Lemma \ref{cyc} (b), (c1) and (c2), by a straight calculation  we have
\begin{align*}
[\alpha g_1g_2] - [\beta g_1g_2] &= -[g_1]^{\otimes2}\otimes[g_1g_2g_3] + [g_2]^{\otimes2}\otimes[g_1g_2g_3] + [g_1]\otimes[g_1g_2]\otimes[g_1g_3]\\
&\quad - [g_2]\otimes[g_1g_2]\otimes[g_2g_3] - 2[g_2]\otimes[g_1g_3] + 2[g_1]\otimes[g_2g_3]
\\
&=\Phi(Q_{\alpha g_1g_2, \beta g_1g_2}). 
\end{align*}

On the other hand we have
\begin{align*}
[\gamma g_1g_2] -  [\delta g_1g_2] &= [(g_2g_1^{-1}g_3g_1g_3^{-1}g_1) \cdot g_2] - [
(g_1^{-1}g_3g_1g_3^{-1}g_2) \cdot (g_1g_2)]\\
&= \left([(g_2g_1^{-1}g_3g_1)\cdot (g_3^{-1}g_1)]\otimes[g_2] - [g_1^{-1}g_3g_1g_3^{-1}g_1]\right)\\
&\quad - \left([g_1^{-1}g_3g_1g_3^{-1}g_2]\otimes[g_1g_2] - [g_1^{-1}g_3g_1g_3^{-1}g_1^{-1}]\right)\\
&= \Big(\left([g_2g_1^{-1}g_3g_1]\otimes[g_3^{-1}g_1] - [g_2g_1^{-1}g_3^2]\right)\otimes[g_2] - [g_1]\Big)\\
&\quad -\left( [g_1^{-1}g_3g_1g_3^{-1}g_2]\otimes[g_1g_2] - [g_1^{-1}g_3g_1g_3^{-1}g_1^{-1}]\right).
\end{align*}
Here we have
\begin{align*}
[g_2g_1^{-1}g_3g_1] &= [(g_1g_2)\cdot g_1^{-1}\cdot g_3]\\
&= [g_1g_2]\otimes[g_1^{-1}g_3] + [g_1^{-1}]\otimes[g_1g_2g_3] + [g_3]\otimes[g_2]\\
&\quad - [g_1g_2]\otimes[g_1^{-1}]\otimes[g_3] -[g_2g_3],\\
[g_2g_1^{-1}g_3^2] &= [(g_2g_1^{-1}g_3)\cdot g_3] =[g_2g_1^{-1}g_3]\otimes[g_3] - [g_2g_1^{-1}],\\
[(g_1^{-1}g_3)\cdot g_1\cdot(g_3^{-1}g_2)] &= [g_1^{-1}g_3]\otimes[g_1g_3^{-1}g_2] + [g_1]\otimes[g_1^{-1}g_2] + [g_3^{-1}g_2]\otimes[g_3]\\
&\quad - [g_1^{-1}g_3]\otimes[g_1]\otimes[g_3^{-1}g_2] - [g_2],\\
[(g_1^{-1}g_3)\cdot g_1\cdot(g_3^{-1}g_1^{-1})] &= [g_1^{-1}g_3]\otimes[g_1g_3^{-1}g_1^{-1}] + [g_1]\otimes[g_1^{-2}] + [g_3^{-1}g_1^{-1}]\otimes[g_3]\\
&\quad - [g_1^{-1}g_3]\otimes[g_1]\otimes[g_3^{-1}g_1^{-1}] -  [g_1^{-1}],
\end{align*}
Using the above identities and Lemma \ref{cyc} (a), (b), (c1), (c3), by a straight calculation  we have
\begin{align*}
[\gamma g_1g_2] - [\delta g_1g_2] &= [g_1]^{\otimes3} + [g_1]\otimes[g_3]^{\otimes2} + [g_1]\otimes[g_1g_3]^{\otimes2} - [g_1]^{\otimes2}\otimes[g_3]\otimes[g_1g_3] -4 [g_1]\\
&\quad + [g_1]^{\otimes2}\otimes[g_2]\otimes[g_3]\otimes[g_1g_2g_3] - [g_1]\otimes[g_2]\otimes[g_1g_3]\otimes[g_1g_2g_3]\\
&\quad - [g_1]\otimes[g_2]\otimes[g_3]\otimes[g_2g_3] - [g_1]\otimes[g_3]\otimes[g_1g_2]\otimes[g_1g_2g_3]\\
&\quad +  [g_1g_2]\otimes[g_1g_3]\otimes[g_1g_2g_3] - [g_1]^{\otimes2}\otimes[g_2]\otimes[g_1g_2]+ [g_1]\otimes[g_1g_2]^{\otimes2}\\
&\quad + [g_3]\otimes[g_1g_2]\otimes[g_2g_3] - [g_2]\otimes[g_3]\otimes[g_1g_2g_3] + [g_2]\otimes[g_1g_3]\otimes[g_2g_3]\\
&\quad + [g_1]\otimes[g_2]^{\otimes2}
\\
&=\Phi(Q_{\gamma g_1g_2,  \delta g_1g_2}).
\end{align*}

\noindent\fbox{$Q_{\alpha g_1g_3,\beta g_1g_3}$ and $Q_{\gamma g_1g_3,\delta g_1g_3}$}\\

We have
\begin{align*}
[\alpha g_1g_3] - [\beta g_1g_3] &= [g_3\cdot(g_2^{-1}g_1g_2g_1^{-1}g_1g_3)] - [(g_2^{-1}g_1g_2g_1^{-1}g_3)\cdot(g_1g_3)]\\
&= \left([g_3]\otimes[g_2^{-1}g_1g_2g_3] - [g_1]\right)- \left([g_2^{-1}g_1g_2 g_1^{-1}g_3]\otimes[g_1g_3] - [g_2^{-1}g_1g_2 g_1^{-1}g_1^{-1}]\right).
\end{align*}
Here we have
\begin{align*}
[g_2^{-1}\cdot g_1\cdot(g_2g_3)] &= [g_2^{-1}]\otimes[g_1g_2g_3] + [g_1]\otimes[g_3] + [g_2g_3]\otimes[g_2^{-1}g_1]\\
&\quad - [g_2^{-1}]\otimes[g_1]\otimes[g_2g_3] - [g_1g_3],\\
[(g_2^{-1}g_1)\cdot g_2\cdot(g_1^{-1}g_3)] &= [g_2^{-1}g_1]\otimes[g_2g_1^{-1}g_3] + [g_2]\otimes[g_2^{-1}g_3] + [g_1^{-1}g_3]\otimes[g_1]\\
&\quad - [g_2^{-1}g_1]\otimes[g_2]\otimes[g_1^{-1}g_3] - [g_3],\\
[(g_2^{-1}g_1g_2 g_1^{-1})\cdot g_1^{-1}] &= [g_2^{-1}\cdot(g_1g_2)\cdot g_1^{-1}]\otimes[g_1^{-1}] - [g_2^{-1}g_1g_2]\\
&= \left([g_2^{-1}]\otimes[g_2] + [g_1g_2]\otimes[g_2^{-1}g_1^{-1}] + [g_1^{-1}]\otimes [g_1] - [g_2^{-1}]\otimes[g_1g_2]\otimes[g_1^{-1}] - [e]\right)\\
&\quad\otimes[g_1^{-1}] - [g_1]\\
&= \left([g_2]^{\otimes2} + [g_1g_2]^{\otimes2} + [g_1]^{\otimes2} - [g_2]\otimes[g_1g_2]\otimes[g_1] - 2\right)\otimes[g_1] - [g_1].
\end{align*}
Using the above identities and Lemma \ref{cyc} (b), (c1), by a straight calculation we have
\begin{align*}
[\alpha g_1g_3] - [\beta g_1g_3] &= -4[g_1]+[g_1]^{\otimes3} + [g_1]\otimes[g_1g_2]^{\otimes2} + [g_1g_2]\otimes[g_1g_3]\otimes[g_1g_2g_3]\\
&\quad + [g_1]\otimes[g_1g_3]^{\otimes2} - [g_1]^{\otimes2}\otimes[g_2]\otimes[g_1g_2]\\
&\quad - [g_1]\otimes[g_2]\otimes[g_1g_3]\otimes[g_1g_2g_3] + [g_1]\otimes[g_2]^{\otimes2}\\
&\quad + [g_2]\otimes[g_1g_3]\otimes[g_2g_3] - [g_1]^{\otimes2}\otimes[g_3]\otimes[g_1g_3]\\
&\quad - [g_3]\otimes[g_1g_3]\otimes[g_1g_2]^{\otimes2} + [g_2]\otimes[g_3]\otimes[g_1g_2g_3]\\
&\quad + [g_1]\otimes[g_2]\otimes[g_3]\otimes[g_1g_2]\otimes[g_1g_3]\\
&\quad - [g_2]^{\otimes2}\otimes[g_3]\otimes[g_1g_3] - [g_3]\otimes[g_1g_2]\otimes[g_2g_3] + [g_1]\otimes[g_3]^{\otimes2}
\\
&=\Phi(Q_{\alpha g_1g_3, \beta g_1g_3}).
\end{align*}

On the other hand we have
\begin{align*}
[\gamma g_1g_3] - [\delta g_1g_3] &= [(g_2g_1^{-1}g_3)\cdot(g_1g_3^{-1}g_1g_3)] - [(g_1^{-1}g_3)\cdot(g_1g_3^{-1}g_2g_1g_3)]\\
&= \left([g_2g_1^{-1}g_3]\otimes[g_1g_3^{-1}g_1g_3] - [g_1g_2^{-1}g_1g_3^{-1}g_1]\right)\\
&\quad - \left([g_1^{-1}g_3]\otimes[g_1g_3^{-1}g_2g_1g_3] - [g_1g_1g_3^{-1}g_2g_1]\right).
\end{align*}
Here we have
\begin{align*}
[(g_1g_3^{-1})\cdot(g_1g_3)] &= [g_1g_3^{-1}]\otimes[g_1g_3] - [g_3^2]\\
[(g_1g_2^{-1})\cdot(g_1g_3^{-1}g_1)] &= [g_1g_2^{-1}]\otimes[g_1\cdot(g_3^{-1}g_1)] - [g_2)\cdot g_3^{-1}\cdot(g_1]\\
&= [g_1g_2^{-1}]\otimes([g_1]\otimes[g_3^{-1}g_1] - [g_3]) - ([g_3]\otimes [g_1g_2]-[g_1g_2g_3] )
\\
[(g_1g_3^{-1}g_2)\cdot(g_1g_3)] &= [g_1g_3^{-1}g_2]\otimes[g_1g_3] - [g_3^{-1}\cdot(g_2g_3^{-1})]\\
&= [g_1g_3^{-1}g_2]\otimes[g_1g_3] - [g_3^{-1}]\otimes[g_2g_3^{-1}] + [g_2],\\
[g_1\cdot(g_1g_3^{-1}g_2g_1)] &= [g_1]\otimes[g_1\cdot(g_3^{-1}g_2g_1)] - [g_3^{-1}g_2g_1]\\
&= [g_1]\otimes\left([g_1]\otimes[g_3^{-1}g_2g_1] - [g_3^{-1}g_2]\right) - [g_3^{-1}g_2g_1].
\end{align*}
Using the above identities and Lemma \ref{cyc} (a), (b), (c3), by a straight calculation we have
\begin{align*}
[\gamma g_1g_3] - [\delta g_1g_3] &= [g_1]^{\otimes2}\otimes[g_1g_2g_3] - [g_3]^{\otimes2}\otimes[g_1g_2g_3] - [g_1]^{\otimes3}\otimes[g_2g_3]\\
&\quad + [g_3]^{\otimes3}\otimes[g_1g_2] - [g_1]\otimes[g_1g_2]\otimes[g_1g_3] + [g_3]\otimes[g_1g_3]\otimes[g_2g_3]\\
&\quad + 2[g_1]\otimes[g_2g_3] - 2[g_3]\otimes[g_1g_2] - [g_1]\otimes[g_1g_3]^{\otimes2}\otimes[g_2g_3]\\
&\quad + [g_3]\otimes[g_1g_2]\otimes[g_1g_3]^{\otimes2}  + [g_1]^{\otimes2}\otimes[g_3]\otimes[g_1g_2]\\
&\quad - [g_1]\otimes[g_3]^{\otimes2}\otimes[g_2g_3] + [g_1]^{\otimes2}\otimes[g_3]\otimes[g_1g_3]\otimes[g_2g_3]\\
&\quad - [g_1]\otimes[g_3]^{\otimes2}\otimes[g_1g_2]\otimes[g_1g_3]
\\
&=\Phi(Q_{\gamma g_1g_3, \delta g_1g_3}).
\end{align*}

\noindent\fbox{$Q_{\alpha g_2g_3, \beta g_2g_3}$ and $Q_{\gamma g_2g_3, \delta g_2g_3}$}\\

We have
\begin{align*}
[\alpha g_2g_3] - [\beta g_2g_3] &=  [g_3\cdot(g_2^{-1}g_1g_2g_1^{-1}g_2g_3)] - [(g_2^{-1}g_1g_2g_1^{-1}g_3)\cdot(g_2g_3)]\\
&= \left([g_3]\otimes[g_2^{-1}g_1g_2)\cdot(g_1^{-1}g_2)\cdot (g_3] - [(g_2^{-1}g_1g_2)\cdot(g_1^{-1}g_2)]\right)\\
&\quad -\left([g_2^{-1}g_1g_2g_1^{-1}g_3]\otimes[g_2g_3] - [g_2^{-1}\cdot(g_1g_2g_1^{-1}g_2^{-1})]\right)\\
&= \Big([g_3]\otimes\left([g_3g_2^{-1}g_1g_2]\otimes[g_1^{-1}g_2] - [g_3g_2^{-1}g_1g_1]\right) - \left([g_1]\otimes[g_1^{-1}g_2] - [g_2^{-1}g_1g_1]\right)\Big) \\
&\quad - \Big([g_2^{-1}g_1g_2g_1^{-1}g_3]\otimes[g_2g_3]- \left([g_2^{-1}]\otimes[g_1g_2g_1^{-1}g_2^{-1}]-[g_2]\right)\Big).
\end{align*}
Here we have
\begin{align*}
[g_3\cdot g_2^{-1}\cdot(g_1g_2)]
&= [g_3]\otimes[g_1] + [g_2^{-1}]\otimes[g_3g_1g_2] + [g_1g_2]\otimes[g_3g_2^{-1}]\\
&\quad - [g_3]\otimes[g_2^{-1}]\otimes[g_1g_2] - [g_3g_1],\\
[(g_3g_2^{-1}g_1)\cdot g_1] 
&= [g_3g_2^{-1}g_1]\otimes[g_1] - [g_3g_2^{-1}],\\
[(g_2^{-1}g_1)\cdot g_1] &= [g_2^{-1}g_1]\otimes[g_1] - [g_2^{-1}],\\
[(g_2^{-1}g_1)\cdot g_2\cdot(g_1^{-1}g_3)] &= [g_2^{-1}g_1]\otimes[g_2g_1^{-1}g_3] + [g_2]\otimes[g_2^{-1}g_3] + [g_1^{-1}g_3]\otimes[g_1]\\
&\quad - [g_2^{-1}g_1]\otimes[g_2]\otimes[g_1^{-1}g_3] - [g_3],\\
[g_1\cdot g_2\cdot(g_1^{-1}g_2^{-1})] &= [g_1]\otimes[g_1^{-1}] + [g_2]\otimes[g_2^{-1}] + [g_1^{-1}g_2^{-1}]\otimes[g_1g_2] - [g_1]\otimes[g_2]\otimes[g_1^{-1}g_2^{-1}] - [e]
\\
 &= [g_1]^{\otimes2} + [g_2]^{\otimes2} + [g_1g_2]^{\otimes2} - [g_1]\otimes[g_2]\otimes[g_1g_2] - 2.
\end{align*}
Using the above identities and Lemma \ref{cyc} (b), (c2), by a straight calculation we have
\begin{align*}
[\alpha g_2g_3] - [\beta g_2g_3] &= [g_2]^{\otimes3} + [g_2]\otimes[g_3]^{\otimes2} + [g_2]\otimes[g_2g_3]^{\otimes2} - [g_2]^{\otimes2}\otimes[g_3]\otimes[g_2g_3]\\
&\quad -4[g_2] +[g_1]\otimes[g_2]^{\otimes2}\otimes[g_3]\otimes[g_1g_2g_3] - [g_1]\otimes[g_2]\otimes[g_2g_3]\otimes[g_1g_2g_3]\\
&\quad - [g_1]\otimes[g_2]\otimes[g_3]\otimes[g_1g_3] - [g_2]\otimes[g_3]\otimes[g_1g_2]\otimes[g_1g_2g_3]\\
&\quad + [g_1g_2]\otimes[g_2g_3]\otimes[g_1g_2g_3] - [g_1]\otimes[g_2]^{\otimes2}\otimes[g_1g_2] + [g_2]\otimes[g_1g_2]^{\otimes2}\\
&\quad + [g_3]\otimes[g_1g_2]\otimes[g_1g_3] - [g_1]\otimes[g_3]\otimes[g_1g_2g_3]\\
&\quad + [g_1]\otimes[g_1g_3]\otimes[g_2g_3] + [g_1]^{\otimes2}\otimes[g_2]
\\
&=\Phi(Q_{\alpha g_2g_3,  \beta g_2g_3}).
\end{align*}

On the other hand, we have
\begin{align*}
[\gamma g_2g_3] - [\delta g_2g_3] &= [(g_2g_1^{-1}g_3g_1g_3^{-1})\cdot(g_2g_3)] - [g_1^{-1}g_3g_1g_3^{-1})\cdot g_2\cdot(g_2g_3]\\
&= \left([g_2g_1^{-1}g_3g_1g_3^{-1}]\otimes[g_2g_3] - [(g_1^{-1}g_3g_1g_3^{-1})\cdot g_3^{-1}]\right)\\
&\quad - \left([g_2g_3)\cdot (g_1^{-1}g_3)\cdot(g_1g_3^{-1}]\otimes[g_2] - [g_3g_1^{-1}g_3g_1g_3^{-1}]\right)\\
&= [g_2g_1^{-1}g_3g_1g_3^{-1}]\otimes[g_2g_3] - [g_1^{-1}g_3g_1g_3^{-1}]\otimes[g_3^{-1}] + [g_1^{-1}g_3g_1]\\
&\quad -\left([g_1^{-1}g_3]\otimes[g_1g_3^{-1}g_2g_3] - [g_1g_1g_3^{-1}g_2]\right)\otimes[g_2] + [g_3].
\end{align*}
Here we  have
\begin{align*}
[(g_2g_1^{-1})\cdot g_3\cdot(g_1g_3^{-1})] &= [g_2g_1^{-1}]\otimes[g_1] + [g_3]\otimes[g_2g_3^{-1}]+ [g_1g_3^{-1}]\otimes[g_2g_1^{-1}g_3]\\
&\quad  - [g_2g_1^{-1}]\otimes[g_3]\otimes[g_1g_3^{-1}] - [g_2],\\
[g_1^{-1}\cdot(g_3g_1)\cdot g_3^{-1}] &= [g_1^{-1}]\otimes [g_1] + [g_1g_3]\otimes [g_1^{-1}g_3^{-1}] + [g_3^{-1}]\otimes [g_3] - [g_1^{-1}]\otimes[g_1g_3]\otimes[g_3^{-1}] -[e]\\
&= [g_1]^{\otimes2} + [g_1g_3]^{\otimes2} + [g_3]^{\otimes2} - [g_1]\otimes[g_1g_3]\otimes[g_3] -2,
\\
[g_1g_3^{-1}g_2g_3] &= [(g_3g_1)\cdot g_3^{-1}\cdot g_2]\\
&= [g_3g_1]\otimes[g_3^{-1}g_2] + [g_3^{-1}]\otimes[g_3g_1g_2] + [g_2]\otimes[g_1]\\
&\quad - [g_3g_1]\otimes[g_3^{-1}]\otimes[g_2] - [g_1g_2],\\
[g_1\cdot(g_1g_3^{-1}g_2)] &= [g_1]\otimes[g_1g_3^{-1}g_2] - [g_3^{-1}g_2].
\end{align*}
Using the above identities and Lemma \ref{cyc} (b) (c3), by a straight calculation we have
\begin{align*}
[\gamma g_2g_3] - [\delta g_2g_3] &= - [g_3]^{\otimes3} - [g_1]^{\otimes2}\otimes[g_3] - [g_3]\otimes[g_1g_3]^{\otimes2} + [g_1]\otimes[g_3]^{\otimes2}\otimes[g_1g_3]\\
&\quad + 4[g_3] - [g_1]\otimes[g_2]\otimes[g_3]^{\otimes2}\otimes[g_1g_2g_3] + [g_2]\otimes[g_3]\otimes[g_1g_3]\otimes[g_1g_2g_3]\\
&\quad + [g_1]\otimes[g_2]\otimes[g_3]\otimes[g_1g_2] + [g_1]\otimes[g_3]\otimes[g_2g_3]\otimes[g_1g_2g_3]\\
&\quad - [g_1g_3]\otimes[g_2g_3]\otimes[g_1g_2g_3] + [g_2]\otimes[g_3]^{\otimes2}\otimes[g_2g_3]  - [g_3]\otimes[g_2g_3]^{\otimes2}\\
&\quad - [g_1]\otimes[g_1g_2]\otimes[g_2g_3] +[g_1]\otimes[g_2]\otimes[g_1g_2g_3] - [g_2]\otimes[g_1g_2]\otimes[g_1g_3]\\
&\quad - [g_2]^{\otimes2}\otimes[g_3]
\\
&=\Phi(Q_{\gamma g_2g_3, \delta g_2g_3}).
\end{align*}

\noindent\fbox{$Q_{\alpha g_1g_2g_3, \beta g_1g_2g_3}$ and $Q_{\gamma g_1g_2g_3, \delta g_1g_2g_3}$}\\

We have
\begin{align*}
[\alpha g_1g_2g_3] - [\beta g_1g_2g_3] &= [g_3\cdot (g_2^{-1}g_1g_2g_1^{-1}g_1g_2g_3)] - [(g_2^{-1}g_1g_2g_1^{-1}g_3)\cdot (g_1g_2g_3)]\\
&= \left([g_3]\otimes[g_2^{-1}g_1)\cdot g_2\cdot (g_2g_3 ] - [g_1g_2]\right)\\
&\quad - \left([g_2^{-1}g_1g_2g_1^{-1}g_3]\otimes[g_1g_2g_3] - [(g_2^{-1}g_1g_2g_1^{-1})\cdot(g_2^{-1}g_1^{-1})]\right)\\
&= \Big([g_3]\otimes([g_2g_3g_2^{-1}g_1]\otimes[g_2] - [g_3g_2^{-1}g_1]) - [g_1g_2]\Big)\\
&\quad - \Big( [g_2^{-1}g_1g_2g_1^{-1}g_3]\otimes[g_1g_2g_3] - [g_2^{-1}g_1g_2g_1^{-1}]\otimes[g_2^{-1}g_1^{-1}] + [g_1g_2]\Big).
\end{align*}
Here we have
\begin{align*}
[g_2g_3g_2^{-1}g_1] &=[g_2^{-1} \cdot g_1 \cdot(g_2g_3)] \\
&= [g_2^{-1}]\otimes[g_1g_2g_3] + [g_1]\otimes[g_3] + [g_2g_3]\otimes[g_2^{-1}g_1]\\
&\quad - [g_2^{-1}]\otimes[g_1]\otimes[g_2g_3] - [g_1g_3],\\
[(g_2^{-1}g_1)\cdot g_2\cdot (g_1^{-1}g_3)] &=  [g_2^{-1}g_1]\otimes[g_2g_1^{-1}g_3] + [g_2]\otimes[g_2^{-1}g_3] + [g_1^{-1}g_3]\otimes[g_1]\\
&\quad - [g_2^{-1}g_1]\otimes[g_2]\otimes[g_1^{-1}g_3] - [g_3],\\
[g_2^{-1}\cdot(g_1g_2)\cdot g_1^{-1}] &=[g_2^{-1}]\otimes [g_2] + [g_1g_2]\otimes [g_2^{-1}g_1^{-1}] + [g_1^{-1}]\otimes [g_1] - [g_2^{-1}]\otimes[g_1g_2]\otimes[g_1^{-1}] - [e]
\\
&= [g_2]^{\otimes2} + [g_1g_2]^{\otimes2} + [g_1]^{\otimes2} - [g_2]\otimes[g_1g_2]\otimes[g_1] - 2.
\end{align*}
Using the above identities and Lemma \ref{cyc} (b), (c1) and (c2), by a straight calculation we have
\begin{align*}
[\alpha g_1g_2g_3] - [\beta g_1g_2g_3] &= - [g_2]\otimes[g_3]\otimes[g_1g_2]\otimes[g_2g_3] + [g_1g_2]\otimes[g_3]^{\otimes2} + [g_1]\otimes[g_3]\otimes[g_2g_3]\\
&\quad - [g_2]\otimes[g_3]\otimes[g_1g_3] - 4 [g_1g_2] + [g_2]^{\otimes2}\otimes[g_1g_2] + [g_1g_2]^{\otimes3}\\
&\quad + [g_1]^{\otimes2}\otimes[g_1g_2] - [g_1]\otimes[g_2]\otimes[g_1g_2]^{\otimes2} - [g_1]\otimes[g_2]\otimes[g_1g_2g_3]^{\otimes2}\\
&\quad + [g_1g_2]\otimes[g_1g_2g_3]^{\otimes2} + [g_1]\otimes[g_2]\otimes[g_3]\otimes[g_1g_2]\otimes[g_1g_2g_3]\\
&\quad - [g_3]\otimes[g_1g_2]^{\otimes2}\otimes[g_1g_2g_3] + [g_2]\otimes[g_2g_3]\otimes[g_1g_2g_3]\\
&\quad - [g_1]^{\otimes2}\otimes[g_3]\otimes[g_1g_2g_3] + [g_1]\otimes[g_1g_3]\otimes[g_1g_2g_3]
\\
&=\Phi(Q_{\alpha g_1g_2g_3, \beta g_1g_2g_3}).
\end{align*}

On the other hand we have
\begin{align*}
[\gamma g_1g_2g_3] - [\delta g_1g_2g_3] &= [(g_2g_1^{-1}g_3)\cdot(g_1g_3^{-1}g_1g_2g_3)] - [(g_1^{-1}g_3)\cdot(g_1g_3^{-1}g_2g_1g_2g_3)]\\
&= \left([g_2g_1^{-1}g_3]\otimes[g_1g_3^{-1}g_1g_2g_3] - [(g_1g_2^{-1}g_1g_3^{-1})\cdot(g_1g_2)]\right)\\
&\quad - \left([g_1^{-1}g_3]\otimes[(g_1g_3^{-1}g_2)\cdot(g_1g_2g_3)] - [g_1\cdot(g_1g_3^{-1}g_2g_1g_2)]\right)\\
&= \left([g_2g_1^{-1}g_3]\otimes[g_1g_3^{-1}g_1g_2g_3] - [g_1g_2^{-1}g_1g_3^{-1}]\otimes[g_1g_2]+ [g_2^{-1}g_1g_3^{-1}g_2^{-1}]\right) \\
&\quad - \Big([g_1^{-1}g_3]\otimes\left([g_1g_3^{-1}g_2]\otimes[g_1g_2g_3] - [g_2^{-1}g_3g_2g_3]\right)\\
&\quad + [g_1]\otimes[g_1g_3^{-1}g_2g_1g_2] - [g_3^{-1}g_2g_1g_2]\Big).
\end{align*}
Here we have
\begin{align*}
[(g_1g_3^{-1})\cdot(g_1g_2g_3)] &= [g_1g_3^{-1}]\otimes[g_1g_2g_3] - [g_3\cdot(g_2g_3)]\\
&= [g_1g_3^{-1}]\otimes[g_1g_2g_3] - [g_3]\otimes[g_2g_3] + [g_2],\\
[(g_1g_2^{-1})\cdot(g_1g_3^{-1})] &= [g_1g_2^{-1}]\otimes[g_1g_3^{-1}] - [g_2g_3^{-1}],\\
[g_2^{-1}\cdot(g_1g_3^{-1}g_2^{-1})] &= [g_2^{-1}]\otimes[(g_1g_3^{-1})\cdot g_2^{-1}] - [g_1g_3^{-1}]\\
&= [g_2^{-1}]\otimes([g_1g_3^{-1}]\otimes[g_2^{-1}] - [g_1g_3^{-1}g_2]) - [g_1g_3^{-1}],\\
[(g_2^{-1}g_3)\cdot(g_2g_3)] &= [g_2^{-1}g_3]\otimes[g_2g_3] - [g_2^{2}]\\
&= [g_2^{-1}g_3]\otimes[g_2g_3] - [g_2]^{\otimes2} + 2,\\
[(g_1g_3^{-1}g_2)\cdot(g_1g_2)] &= [g_1g_3^{-1}g_2]\otimes[g_1g_2] - [g_3],\\
[(g_3^{-1}g_2)\cdot(g_1g_2)] &= [g_3^{-1}g_2]\otimes[g_1g_2] - [g_3g_1].
\end{align*}
Using the above identities and Lemma \ref{cyc} (b), (c1) and (c3), by a straight calculation we have
\begin{align*}
[\gamma g_1g_2g_3] - [\delta g_1g_2g_3] &= -  [g_1]\otimes[g_1g_3]\otimes[g_2g_3]\otimes[g_1g_2g_3] + [g_3]\otimes[g_1g_2]\otimes[g_1g_3]\otimes[g_1g_2g_3]\\
&\quad + [g_1]\otimes[g_1g_2]\otimes[g_1g_2g_3] - [g_3]\otimes[g_2g_3]\otimes[g_1g_2g_3]\\
&\quad + [g_3]^{\otimes2}\otimes[g_1g_2]\otimes[g_2g_3] - [g_1]^{\otimes2}\otimes[g_1g_2]\otimes[g_2g_3] + [g_1g_3]\otimes[g_2g_3]^{\otimes2}\\
&\quad - [g_1g_2]^{\otimes2}\otimes[g_1g_3] + [g_1]^{\otimes2}\otimes[g_3]\otimes[g_2g_3]\otimes[g_1g_2g_3]\\
&\quad - [g_1]\otimes[g_3]^{\otimes2}\otimes[g_1g_2]\otimes[g_1g_2g_3] + [g_1]\otimes[g_3]\otimes[g_1g_2]^{\otimes2}\\
&\quad - [g_1]\otimes[g_3]\otimes[g_2g_3]^{\otimes2} + [g_1]\otimes[g_2]\otimes[g_2g_3] - [g_2]\otimes[g_3]\otimes[g_1g_2]
\\
&=\Phi(Q_{\gamma g_1g_2g_3, \delta g_1g_2g_3}).
\end{align*}

Hence we have the assertion.
\end{proof}

\end{document}